\documentclass[11pt]{amsart}
\usepackage{mathrsfs}
\usepackage{amssymb}
\usepackage{amsmath}
\usepackage{amsthm}

\makeatletter
\def\LaTeX{\leavevmode L\raise.42ex
    \hbox{\kern-.3em\size{\sf@size}{0pt}\selectfont A}\kern-.15em\TeX}
\makeatother

\newcommand{\BibTeX}{{\rm B\kern-.05em{\sc
          i\kern-.025emb}\kern-.08em\TeX}}

\makeatletter
\def\@currentlabel{2.1}\label{e:dispaa}
\def\@currentlabel{2.21}\label{e:dispau}
\def\@currentlabel{2.22}\label{e:dispav}
\def\@currentlabel{2.23}\label{e:dispaw}
\def\@currentlabel{2.24}\label{e:dispax}
\def\theequation{\thesection.\@arabic\c@equation}
\makeatother

\textheight 23.6cm
\topmargin -0.8cm
\textwidth 16cm
\oddsidemargin 0cm
\evensidemargin 0cm

\numberwithin{equation}{section}

\newcounter{mnotecount}[section]

\newcommand{\rmnote}[1]{}

\usepackage{amsfonts,latexsym}

\usepackage{graphics}

\renewcommand{\theequation}{\arabic{section}.\arabic{equation}}

\newtheorem{thm}{Theorem}[section]
\newtheorem{lem}[thm]{Lemma}

\newtheorem{prop}[thm]{Proposition}
\theoremstyle{definition}

\newtheorem{rem}[thm]{Remark}


\begin{document}
\title[Classification of positive radial solutions to a weighted biharmonic equation]{Classification of positive radial solutions to a weighted biharmonic equation}

\author{Yuhao Yan}
\address{School of Mathematical Sciences, East China Normal
University, Shanghai 200241, China}
\email{yanyuhao2021@163.com}

\date{}

\begin{abstract}
\noindent In this paper, we consider the weighted fourth order equation
$$\Delta(|x|^{-\alpha}\Delta u)+\lambda \text{div}(|x|^{-\alpha-2}\nabla u)+\mu|x|^{-\alpha-4}u=|x|^\beta u^p\quad \text{in} \quad \mathbb{R}^n \backslash \{0\},$$
where $n\geq 5$, $-n<\alpha<n-4$, $p>1$ and $(p,\alpha,\beta,n)$ belongs to the critical hyperbola
$$\frac{n+\alpha}{2}+\frac{n+\beta}{p+1}=n-2.$$
We prove the existence of radial solutions to the equation for some $\lambda$ and $\mu$. On the other hand, let $v(t):=|x|^{\frac{n-4-\alpha}{2}}u(|x|)$, $t=-\ln |x|$, then for the radial solution $u$ with non-removable singularity at origin, $v(t)$ is a periodic function if $\alpha \in (-2,n-4)$ and $\lambda$, $\mu$ satisfy some conditions; while for $\alpha \in (-n,-2]$, there exists a radial solution with non-removable singularity and the corresponding function $v(t)$ is not periodic. We also get some results about the best constant and symmetry breaking, which is closely related to the Caffarelli-Kohn-Nirenberg type inequality. 
\end{abstract}

\maketitle

\section{Introduction}
In recent years, there has been a lot of interest in the weighted fourth order H{\'e}non equation
$$\Delta(|x|^{-\alpha}\Delta u)=|x|^{\beta}|u|^{p-1}u \quad \text{in} \quad \mathbb{R}^n,$$
where the parameters $\alpha$, $\beta$, $p$ and dimension $n$ are asked to satisfy certain conditions. For $\alpha=\beta=0$, it is closely related to the Q-curvature problem in comformal geometry. Some fundamental results about this equation have been established, such as the weighted Sobolev inequalities, existence of solutions, Caffarelli-Kohn-Nirenberg type inequalities and so on. We refer the readers to \cite{CC16,CM11,CM12,GHWW,GWW,HY,M14} and the references therein.

    In this paper, we are interested in the positive radial solutions of the equation
\begin{equation}\label{1.1}
\Delta(|x|^{-\alpha}\Delta u)+\lambda \text{div}(|x|^{-\alpha-2}\nabla u)+\mu|x|^{-\alpha-4}u=|x|^\beta u^p\quad \text{in} \quad \mathbb{R}^n \backslash \{0\},
\end{equation}
where $n\geq 5$, $-n<\alpha<n-4$, $p>1$ and $(p,\alpha,\beta,n)$ belongs to the critical hyperbola
\begin{equation}\label{1.2}
\frac{n+\alpha}{2}+\frac{n+\beta}{p+1}=n-2.
\end{equation}

  When $\alpha=\beta=\lambda=\mu=0$, equation (1.1) can be reduced to
\begin{equation}\label{1.3}
\Delta^2 u=u^{\frac{n+4}{n-4}} \quad \text{in} \quad  \mathbb{R}^n \backslash \{0\},
\end{equation}
which is conformal invariant and has a specific meaning in conformal geometry. It can also be derived from the Sobolev embedding of $H^2$ into $L^{\frac{2n}{n-4}}$
\begin{equation}\label{1.4}
\inf\limits_{u\in H^2(\mathbb{R}^n)} \frac{\int|\Delta u|^2}{\left(\int u^{\frac{2n}{n-4}}\right)^{\frac{n-4}{n}}}.
\end{equation}

  Lions proved the existence of extremal functions of (1.4) in \cite{LI84}. In the same paper, Lions also proved the radial symmetry of any extremal function of (1.4).
  In general, Lin \cite{LI98} proved that solutions of (1.3) with removable singularity at origin are given by
 $$u(x)=C_{n} \left(\frac{\lambda}{1+\lambda^2|x-x_0|^2}\right)^{\frac{n-4}{2}},\quad C_{n} =((n-4)(n-2)n(n+2))^{(n-4)/8},$$
where $\lambda>0$, $x_0\in\mathbb{R}^n.$ Moreover, let
$$v_{\lambda}(t):=|x-x_0|^{\frac{n-4}{2}}u_{\lambda}(|x-x_0|),\quad t=-\ln(\lambda |x-x_0|),$$
direct computation yields that
$$v_{\lambda}(t)=C_{n}(2\cosh t)^{-\frac{n-4}{2}}\quad \text{in} \quad\mathbb{R}.$$

 Lin also proved solutions of (1.3) with non-removable singularity at origin are radial symmetric. Passing to the Emden-Fowler coordinates and writting 
$$u(x)=|x|^{-\frac{n-4}{2}}v(t),\quad t=-\ln|x|,$$ 
Frank-K{\"o}nig \cite{FK19} and Guo-Huang-Wang-Wei \cite{GHWW} have showed that the function $v(t)$ is periodic in $t$.

    Using the above transformation, equation (1.3) becomes
\begin{equation}\label{1.5}
v^{(4)}-\frac{n(n-4)+8}{2} v''+\frac{n^2(n-4)^2}{16}v=v^{\frac{n+4}{n-4}}\quad \text{in} \quad \mathbb{R}.
\end{equation}
 Frank-K{\"o}nig proved that, if $v\in C^4(\mathbb{R})$ is a solution of (1.5), then one of the following three alternatives holds:\\
(a)$v\equiv l$(some positive constant) or $v\equiv 0;\\$
(b)$v(t)=C_{n}^{'}(2\cosh(t-T))^{-(n-4)/2},T\in R;\\$
(c)$v$ is periodic and has a unique local maximum and minimum per period and is symmetric with respect to its local extrema.

Thus, all radial solutions of equation (1.3) have been classified.

    Very recently, Huang-Wang \cite{HW20} considered the classification of the positive radial solutions of equation
\begin{equation}\label{1.6}
\Delta(|x|^{-\alpha}\Delta u)=|x|^{\beta}u^p\quad \text{in} \quad \mathbb{R}^n\backslash \{0\},
\end{equation}
where $-n<\alpha <n-4$, $p>1$ and $(p,\alpha,\beta,n)$ satisfies (1.2). Setting $v(t)=|x|^{\frac{n-4-\alpha}{2}}u(|x|)$, $t=-\ln |x|$, one obtains
\begin{equation}\label{1.7}
v^{(4)}-\frac{(n-2)^2+(\alpha+2)^2}{2} v''+\frac{(n-4-\alpha)^2(n+\alpha)^2}{16}v=v^p\quad \text{in} \quad\mathbb{R},
\end{equation}
as before, the solutions of (1.7) are given by one of the following three types:\\
($a^{'}$)$v\equiv 0$ or $v\equiv l;$\\
($b^{'}$)$v=C(\cosh \nu t)^m;$\\
($c^{'}$)$v$ is periodic.\\
  It is worth noting that, only when $\alpha=\beta=0$ or $(n+\alpha)(n+\beta)=(n-4-\alpha)^2$, (1.7) admits $C(\cosh\nu t)^m$-type solutions. We denote the corresponding solutions of (1.6) by $\overline{u}_1 (x)$ and $\overline{u}_2 (x)$.

  Moreover, Huang-Wang proved that the best constant
$$\overline{S}_{p}^{rad}(\alpha,\beta):=\inf\limits_{u\in \overline{\mathcal{N}}_{rad}^{2}(\mathbb{R}^n,\alpha)\backslash\{0\}}\frac{\int_{\mathbb{R}^n}|x|^{-\alpha}|\Delta u|^2 dx}{\left(\int_{\mathbb{R}^n}|x|^{\beta}|u|^{p+1}dx\right)^{2/(p+1)}}$$
is achieved in $\overline{\mathcal{N}}_{rad}^{2}(\mathbb{R}^n,\alpha)=\{u|u(x)=u(|x|),u(x)\in \overline{\mathcal{N}}^{2}(\mathbb{R}^n,\alpha)\}$, where $\overline{\mathcal{N}}^{2}(\mathbb{R}^n,\alpha)$ is the completion of $C_{0}^{\infty}(\mathbb{R}^n)$ in the norm
\begin{equation}
\||u\||_{\alpha}^{2}:=\int_{\mathbb{R}^n}|x|^{-\alpha}|\Delta u|^2dx.
\end{equation}
In particular, if $\alpha=\beta>-2$, $\overline{S}_{p}^{rad}(\alpha ,\alpha)$ is achieved by $\overline{u}_1$; if $(n+\alpha)(n+\beta)=(n-4-\alpha)^2$ with $\alpha<-2$, $\overline{S}_{p}^{rad}(\alpha ,\beta)$ is achieved by $\overline{u}_2$. Huang-Wang also proved, if $-2<\alpha<n-4$ and $u\in C^4(\mathbb{R}^n\backslash\{0\})$ is a positive solution of (1.6) with non-removable singularity at origin, then $v(t)(=|x|^{\frac{n-4-\alpha}{2}}u(|x|),t=-\ln(|x|))$ is periodic.
  
  In this paper, we generalize some results in \cite{HW20}, consider the positive radial solutions of equation (1.1), let $v(t)=|x|^{\frac{n-4-\alpha}{2}}u(|x|)$, $t=-\ln(|x|)$ , we have the ODE
\begin{equation}\label{1.8}
v^{(4)}-K_2 v''+K_0v=v^p\quad \text{in} \quad\mathbb{R},
\end{equation}
where $K_2 =\frac{(n-2)^2+(\alpha+2)^2}{2}-\lambda$, $K_0=\frac{(n-4-\alpha)^2(n+\alpha)^2}{16}-\lambda(\frac{n-4-\alpha}{2})^2+\mu.$\\
Denote
\begin{equation}\label{1.9}
\|u\|_\alpha ^2:=\int_{\mathbb{R}^n}|x|^{-\alpha}|\Delta u|^2-\lambda|x|^{-\alpha -2}|\nabla u|^2+\mu |x|^{-\alpha -4}u^2dx,
\end{equation}
then we will prove that when $\lambda,\mu$ satisfies
\begin{equation}\label{1.10}
\lambda \leq \frac{4\mu}{(n-4-\alpha)^2}+\frac{(n+\alpha)^2}{4}, \quad \mu \leq (\frac{n-4-\alpha}{2})^4,
\end{equation}
or
\begin{equation}\label{1.11}
\lambda \leq \frac{(n-4-\alpha)^2}{4}+\frac{(n+\alpha)^2}{4}, \mu > \left (\frac{n-4-\alpha}{2} \right )^4,
\end{equation}
$\|\cdot\|_\alpha$ is a norm of $C_{0,rad}^{\infty}(\mathbb{R}^n\backslash \{0\})=\{u|u\in C_{0}^{\infty}(\mathbb{R}^n\backslash \{0\}), u=u(|x|)\}$, when $\lambda,\mu$ satisfies (1.11) or (1.12), we define $\mathcal{N}_{rad}^{2}(\mathbb{R}^n,\alpha)$ as the completion of $C_{0,rad}^{\infty}(\mathbb{R}^n\backslash \{0\})$ in the norm $\|\cdot\|_\alpha$.

In Huang-Wang's paper \cite{HW20}, it is easy to see $\|| \cdot \||_{\alpha}$ is a norm. But in our case, $\lambda$ and $\mu$ are indeterminate, hence the positive definiteness and subadditivity don't necessarily satisfy. So we
need to use some Hardy-Rellich type equalities to transform the expression of $\|u\|_\alpha ^2$, and search for the range of $\lambda$, $\mu$ that guarantee the positive definiteness and subadditivity.

To elaborate our main results, we state three conditions first:

$(C_1)$ $\mu \geq 0$ and $\lambda \leq (n-2)(2+\alpha)-2 \sqrt{\mu}$;

$(C_2)$ $0 \leq \mu \leq (\frac{n-4-\alpha}{2})^{4}$ and $(n-2)(2+\alpha)+2\sqrt{\mu}\leq \lambda \leq \frac{4\mu}{(n-4-\alpha)^2}+\frac{(n+\alpha)^2}{4}$;

$(C_3)$ $\mu \leq 0$ and $\lambda \leq \frac{4\mu}{(n-4-\alpha)^2}+\frac{(n+\alpha)^2}{4}$.\\
Notice that if $\lambda$, $\mu$ satisfies any of these three conditions, then $K_0 \geq 0$, $K_2 \geq 0$, $K_2^2-4K_0\geq 0$ and (1.11)-(1.12) hold.

The main results of this paper are as follows:

\begin{thm}
Assume $-n<\alpha<n-4$, $p>1$ , $(p,\alpha,\beta,n)$ satisfies (1.2) and one of the conditions $(C_1)-(C_3)$ holds. Then in $\mathcal{N}_{rad}^{2}(\mathbb{R}^n,\alpha)$, the problem (1.1) admits a unique positive solution $u$. Moreover,\\
i) If $\alpha =\beta >-2$, and the following hold
\begin{equation}\label{1.12}
\frac{\lambda}{(n-2)^2}=\frac{(p+1)^4-16\pm \sqrt{[(p+1)^4-16]^2+(p+1)^2(p+3)^4[(p+1)^2+4]^2\frac{\mu}{(n-2)^4}}}{2(p+1)^2(p+3)^2},
\end{equation}
then the unique solution can be explicitly given by
$$u(x)=u_1(x):=\frac{C_1}{2^{m_1}}|x|^{-(\frac{n-4-\alpha}{2}+\nu_1 m_1)} (1+|x|^{2\nu_1})^{m_1}$$
with 

$m_1=-\frac{n-4-\alpha}{2+\alpha}$, $\nu_1=\frac{2+\alpha}{2} \sqrt{1-\frac{2\lambda}{(n-2)^2+(2+\alpha)^2}}$, $C_1^{p-1}=m_1(m_1-1)(m_1-2)(m_1-3)\nu_1^{4}$;\\
ii) If $(n+\alpha)(n+\beta)=(n-4-\alpha)^2$ with $\alpha <-2$ and the following hold
\begin{equation}\label{1.13}
\frac{\lambda}{(n-2)^2}=\frac{(p+1)^2[16-(p+1)^4]\pm \sqrt{(p+1)^4[(p+1)^4-16]^2+16(p+1)^2(p+3)^4[(p+1)^2+4]^2\frac{\mu}{(n-2)^4}}}{8(p+1)^2(p+3)^2},
\end{equation}
then the unique solution is given by
$$u(x)=u_2(x):=\frac{C_2}{2^{m_2}}|x|^{-(\frac{n-4-\alpha}{2}+\nu_2 m_2)}(1+|x|^{2\nu_2})^{m_2},$$
with 

$m_2=\frac{n+\alpha}{2+\alpha}$, $\nu_2=-\frac{2+\alpha}{2}\sqrt{1-\frac{2\lambda}{(n-2)^2+(2+\alpha)^2}}$, $C_2^{p-1}=m_2(m_2-1)(m_2-2)(m_2-3)\nu_2^{4}.$
\label{thm-1}
\end{thm}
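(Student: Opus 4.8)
The plan is to exhibit the solution as the ground state of the energy functional $J(u)=\tfrac12\|u\|_\alpha^2-\tfrac1{p+1}\int_{\mathbb{R}^n}|x|^\beta u_+^{p+1}\,dx$ on $\mathcal{N}_{rad}^2(\mathbb{R}^n,\alpha)$, whose nontrivial critical points are precisely the nonnegative radial solutions of (1.1). Under any of $(C_1)$–$(C_3)$ one has $K_0,K_2\ge0$ and $K_2^2-4K_0\ge0$, so after the Emden–Fowler change $v(t)=|x|^{(n-4-\alpha)/2}u(|x|)$, $t=-\ln|x|$, the quadratic part of $\|u\|_\alpha^2$ becomes, up to a positive constant, $\int_{\mathbb{R}}\big[(v'')^2+K_2(v')^2+K_0v^2\big]\,dt$. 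Its Fourier symbol factors as $(\xi^2+a^2)(\xi^2+b^2)$ with $a^2+b^2=K_2$, $a^2b^2=K_0$ and $a,b\ge0$ real; this both confirms that $\|\cdot\|_\alpha$ is a genuine norm and reduces the critical problem in $\mathbb{R}^n$ to the autonomous fourth–order ODE (1.8) on $\mathbb{R}$.

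For existence I would minimize the quotient $S_p^{rad}(\alpha,\beta)=\inf\|u\|_\alpha^2/(\int|x|^\beta|u|^{p+1})^{2/(p+1)}$. The advantage of the Emden–Fowler reduction is that a minimizing sequence $\{v_k\}$ for the transformed quotient on $H^2(\mathbb{R})$ loses compactness only through $t$–translations (i.e.\ the dilations $x\mapsto\sigma x$); concentration–compactness then reduces matters to excluding vanishing and dichotomy, both ruled out by translation invariance and strict subadditivity, giving a minimizer $v_*$. Using the factorization $L=(\partial_t^2-a^2)(\partial_t^2-b^2)$ and the positivity of the associated Green's function on $\mathbb{R}$, one upgrades $v_*$ to a strictly positive solution of (1.8) through a fixed–point argument for the positivity–preserving operator $L^{-1}$, and elliptic regularity gives $v_*\in C^4$. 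Undoing the transformation produces a positive radial solution $u$ of (1.1).

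Uniqueness is where I expect the main difficulty. A finite–energy solution satisfies $v\to0$ as $t\to\pm\infty$, which by the classification of entire solutions of (1.8) into the types $(a')$–$(c')$ immediately excludes the constant solution $v\equiv l$ and the periodic solutions $(c')$, leaving only the homoclinic ground state. The remaining step—that this decaying positive solution is unique up to the scaling symmetry $u_\sigma(x)=\sigma^{(4+\alpha+\beta)/(p-1)}u(\sigma x)$—is genuinely delicate for a fourth–order equation, where no maximum principle is available and the phase space is four–dimensional. I would first show the ground state is even about its unique maximum (via a reflection/energy comparison exploiting the factored operator), and then close the argument by a shooting and monotonicity analysis of the reduced symmetric problem, or alternatively by a Pohozaev identity that constrains the admissible decay rates and forces the orbit onto the explicit family below.

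Finally, in cases (i) and (ii) I would verify the closed forms directly. Substituting $v=C(\cosh\nu t)^m$ into (1.8) and repeatedly using $\sinh^2=\cosh^2-1$ writes $v^{(4)}-K_2v''+K_0v$ as a linear combination of $(\cosh\nu t)^m$, $(\cosh\nu t)^{m-2}$ and $(\cosh\nu t)^{m-4}$. Matching the lowest power against $v^p=C^{p-1}(\cosh\nu t)^{mp}$ forces $m(p-1)=-4$, which is exactly $m_1=-\tfrac{n-4-\alpha}{2+\alpha}$ (resp.\ $m_2=\tfrac{n+\alpha}{2+\alpha}$) once the hyperbola (1.2) with $\alpha=\beta$ (resp.\ $(n+\alpha)(n+\beta)=(n-4-\alpha)^2$) is used; requiring the coefficients of $(\cosh\nu t)^m$ and $(\cosh\nu t)^{m-2}$ to vanish yields two algebraic relations that fix $\nu$ in terms of $K_2$ and produce, after solving the resulting quadratic in $\lambda$, precisely the two branches (1.12) and (1.13), together with $C^{p-1}=m(m-1)(m-2)(m-3)\nu^4$. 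Since $v=C(\cosh\nu t)^m$ is then a positive finite–energy solution, uniqueness identifies it with the ground state, and undoing the Emden–Fowler transformation recovers $u_1$ and $u_2$.
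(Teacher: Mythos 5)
Your overall architecture (Emden--Fowler reduction to the autonomous ODE (1.8), existence by minimizing the transformed quotient, explicit $(\cosh\nu t)^m$ ansatz with coefficient matching forcing $m=-4/(p-1)$, $\nu^2=K_2/(m^2+(m-2)^2)$, $C^{p-1}=m(m-1)(m-2)(m-3)\nu^4$ and the compatibility relation $K_2^2/K_0=[(p+1)^2+4]^2/(4(p+1)^2)$, which unpacks to the two branches for $\lambda$) is exactly the paper's. The difference is that the paper disposes of both existence and uniqueness in one line by citing the Bhakta--Musina theorem (Theorem 2.1): part (i) gives a minimizer of $\inf_{w\in H^2(\mathbb{R})}\bigl(\int |w''|^2+A|w'|^2+Bw^2\bigr)/\bigl(\int|w|^{q+1}\bigr)^{2/(q+1)}$, and part (ii) gives uniqueness up to translation, inversion and sign precisely under the hypothesis $A^2-4B\ge 0$, which is what conditions $(C_1)$--$(C_3)$ are designed to guarantee. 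Your existence argument via concentration--compactness is a plausible re-derivation of part (i) and is not a gap.

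The genuine gap is uniqueness. You correctly sense that this is ``where the main difficulty'' lies, but you do not resolve it: you offer a menu of candidate strategies (reflection/energy comparison, shooting, Pohozaev identity) without carrying any of them out, and uniqueness of the homoclinic for a fourth-order autonomous ODE is a hard theorem, not a routine step --- it is the entire content of Theorem 2.1(ii), which rests on the factorization $L=(\partial_t^2-a^2)(\partial_t^2-b^2)$ with $a,b$ real (hence on $K_2^2-4K_0\ge 0$) and on delicate comparison arguments in the spirit of van den Berg's work on the extended Fisher--Kolmogorov equation. Moreover, your intermediate claim that finite energy ``immediately excludes the constant solution and the periodic solutions, leaving only the homoclinic ground state'' presupposes the trichotomy $(a')$--$(c')$, which in the cited literature is established only for specific parameter values and which in any case classifies \emph{types} of solutions without asserting that the homoclinic type contains a single orbit. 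As written, your proposal proves existence and verifies the explicit formulas, but the assertion that the solution in $\mathcal{N}^2_{rad}(\mathbb{R}^n,\alpha)$ is \emph{unique} --- and hence that $u_1$, $u_2$ are \emph{the} solutions rather than \emph{some} solutions --- is not established; you must either prove the analogue of Theorem 2.1(ii) or cite it.
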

 \vspace{0.3 cm}

As we know, the best constant is usually not easy to calculate. But in our work, for some special cases, the best constant can be calculated, the main results of this are as follows.
  Denote
$$S_{p}^{rad}(\alpha,\beta)=\inf \limits_{u \in \mathcal{N}_{rad}^{2}(\mathbb{R}^n,\alpha) \backslash \{0\} }\frac{\int_{\mathbb{R}^n}|x|^{-\alpha}|\Delta u|^{2}-\lambda |x|^{-\alpha-2}|\nabla u|^{2}+\mu |x|^{-\alpha-4}u^{2}dx}{(\int_{\mathbb{R}^{n}}|x|^{\beta}|u|^{p+1}dx)^{\frac{2}{p+1}}},$$
then we have

\begin{thm}
Under the conditions of theorem 1.1, the following hold\\
i) $S_p^{rad}(\alpha,\beta)>0$, and $S_p^{rad}(\alpha,\beta)$ is achieved in  $\mathcal{N}_{rad}^{2}(\mathbb{R}^n,\alpha)$; \\
ii) If $\alpha=\beta>-2$, and (1.13) is satisfied, then $S_p^{rad}(\alpha,\alpha)=\omega_n^{\frac{2(2+\alpha)}{n+\alpha}}\phi_{1,p}(\alpha)$ is achieved by $u_1(x)$,where 
$$\phi_{1,p}(\alpha)=\nu_1^{\frac{2(2n+\alpha-2)}{n+\alpha}}m_1(m_1-1)(m_1-2)(m_1-3)[\frac{4m_1(m_1-1)}{(2m_1-1)(2m_1-3)}B(-m_1,\frac{1}{2})]^{\frac{2(2+\alpha)}{n+\alpha}};$$
iii) If $(n+\alpha)(n+\beta)=(n-4-\alpha)^2$ with $\alpha <-2$, and (1.14) is satisfied, then $S_p^{rad}(\alpha,\beta)=\omega_n^{-\frac{2(2+\alpha)}{n-4-\alpha}}\phi_{2,p}(\alpha)$ is achieved by $u_2(x)$, where
$$\phi_{2,p}(\alpha)=\nu_2^{\frac{2(2n-\alpha-6)}{n-4-\alpha}}m_2(m_2-1)(m_2-2)(m_2-3)[\frac{4m_2(m_2-1)}{(2m_2-1)(2m_2-3)}B(-m_2,\frac{1}{2})]^{-\frac{2(2+\alpha)}{n-4-\alpha}}.$$ 

\label{thm-2}
\end{thm}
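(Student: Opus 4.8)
The plan is to push everything through the Emden--Fowler change of variables already used to produce (1.8). Writing $v(t)=|x|^{(n-4-\alpha)/2}u(|x|)$ with $t=-\ln|x|$ and using $\int_{\mathbb{R}^n}f(|x|)\,dx=\omega_n\int_0^\infty f(r)r^{n-1}\,dr$, the critical hyperbola (1.2) rearranges to $\tfrac{n+\beta}{p+1}=\tfrac{n-4-\alpha}{2}$, which is precisely the identity that makes the denominator integral autonomous, so that $\int_{\mathbb{R}^n}|x|^\beta|u|^{p+1}\,dx=\omega_n\int_{\mathbb{R}}|v|^{p+1}\,dt$. The same substitution turns the numerator $\|u\|_\alpha^2$ into $\omega_n\int_{\mathbb{R}}(|v''|^2+K_2|v'|^2+K_0v^2)\,dt$, the quadratic form whose Euler--Lagrange equation is (1.8). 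Therefore
\[
S_p^{rad}(\alpha,\beta)=\omega_n^{(p-1)/(p+1)}\,\inf_{0\neq v\in H^2(\mathbb{R})}\frac{\int_{\mathbb{R}}(|v''|^2+K_2|v'|^2+K_0v^2)\,dt}{\left(\int_{\mathbb{R}}|v|^{p+1}\,dt\right)^{2/(p+1)}}=:\omega_n^{(p-1)/(p+1)}\,\mathcal{I}.
\]
A direct computation from (1.2) gives $(p-1)/(p+1)=2(2+\alpha)/(n+\alpha)$ when $\alpha=\beta$ and $(p-1)/(p+1)=-2(2+\alpha)/(n-4-\alpha)$ when $(n+\alpha)(n+\beta)=(n-4-\alpha)^2$, which are exactly the powers of $\omega_n$ appearing in ii) and iii).

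For i) I would first prove $\mathcal{I}>0$. The conditions $(C_1)$--$(C_3)$ give $K_2\geq0$ and $K_0>0$, so the numerator is comparable to $\|v\|_{H^2(\mathbb{R})}^2$ (if $K_2=0$ the $\|v'\|_2^2$ term is recovered by interpolation), and the one-dimensional embedding $H^2(\mathbb{R})\hookrightarrow L^{p+1}(\mathbb{R})$ yields $\mathcal{I}>0$, hence $S_p^{rad}(\alpha,\beta)>0$. For attainment I would run Lions' concentration--compactness on a minimizing sequence $v_k$ normalized by $\int_{\mathbb{R}}|v_k|^{p+1}=1$: such a sequence is bounded in $H^2(\mathbb{R})$; vanishing is impossible since it would force $v_k\to0$ in $L^{p+1}$, contradicting the normalization; and dichotomy is ruled out by the strict superadditivity $I(\theta)+I(1-\theta)=\big[\theta^{2/(p+1)}+(1-\theta)^{2/(p+1)}\big]\mathcal{I}>\mathcal{I}$ for $\theta\in(0,1)$, which holds because $2/(p+1)<1$ for $p>1$. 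Thus, after a translation in $t$, the sequence converges strongly in $L^{p+1}$ to a minimizer, which attains $\mathcal{I}$ by weak lower semicontinuity of the numerator; transporting back produces a minimizer of $S_p^{rad}$ in $\mathcal{N}_{rad}^2(\mathbb{R}^n,\alpha)$.

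For ii) and iii) the task is to identify the minimizer explicitly and evaluate $\mathcal{I}$. By i) a minimizer exists, and after normalizing its Lagrange multiplier it is a positive $H^2(\mathbb{R})$ solution of (1.8). The classification recalled in the introduction says the only solutions are constants, periodic functions, and the $C(\cosh\nu t)^m$ family; of these only the last has finite energy, and under the hypotheses (1.13), resp.\ (1.14), Theorem 1.1 identifies the unique positive solution with $u_1$, resp.\ $u_2$, whose Emden--Fowler form is $v_i(t)=C_i(\cosh(\nu_i t))^{m_i}$. Multiplying (1.8) by $v_i$ and integrating (the boundary terms vanish by the exponential decay forced by $m_i<0$) gives $\int_{\mathbb{R}}(|v_i''|^2+K_2|v_i'|^2+K_0v_i^2)\,dt=\int_{\mathbb{R}}v_i^{p+1}\,dt$, so the quotient collapses to
\[
\mathcal{I}=\Big(\int_{\mathbb{R}}v_i^{p+1}\,dt\Big)^{(p-1)/(p+1)}.
\]
Finally $\int_{\mathbb{R}}v_i^{p+1}\,dt=C_i^{p+1}\nu_i^{-1}\int_{\mathbb{R}}(\cosh s)^{m_i(p+1)}\,ds=C_i^{p+1}\nu_i^{-1}B\!\left(-\tfrac{m_i(p+1)}{2},\tfrac12\right)$; the hyperbola gives $-m_i(p+1)/2=-m_i+2$, and two applications of $B(x+1,\tfrac12)=\tfrac{x}{x+1/2}B(x,\tfrac12)$ with $x=-m_i$ turn this into $\tfrac{4m_i(m_i-1)}{(2m_i-1)(2m_i-3)}B(-m_i,\tfrac12)$. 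Substituting $C_i^{p-1}=m_i(m_i-1)(m_i-2)(m_i-3)\nu_i^4$ and raising to the power $(p-1)/(p+1)$ reproduces exactly $\phi_{1,p}(\alpha)$, resp.\ $\phi_{2,p}(\alpha)$; multiplying by $\omega_n^{(p-1)/(p+1)}$ then gives the claimed formulas.

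The hardest part is the attainment in i): because the reduced functional is translation invariant in $t$, minimizing sequences can escape to infinity, so the concentration--compactness step (excluding vanishing and dichotomy) is the real content. A second delicate point, crucial for ii)--iii), is the rigorous identification of the minimizer with the explicit bubble: the fourth-order operator has no maximum principle, so positivity of the minimizer and the exclusion of the constant and periodic branches cannot be argued directly and must instead be imported from the classification and the uniqueness in Theorem 1.1. Finally, one must watch the borderline $K_0=0$ allowed by $(C_1)$--$(C_3)$: there the $L^2$-control degenerates, approximate minimizers spread out, and $\mathcal{I}$ drops to $0$, so that case must be excluded (or treated under the strict inequality $K_0>0$) for the positivity assertion in i) to hold.
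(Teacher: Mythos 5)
Your proposal is correct and follows the same backbone as the paper's proof: the Emden--Fowler reduction $S_p^{rad}(\alpha,\beta)=\omega_n^{(p-1)/(p+1)}\phi_p(\alpha)$, identification of the minimizer with the $C(\cosh\nu t)^m$ bubble via the uniqueness statement, and evaluation through the Beta function identity $B(x+1,\tfrac12)=\tfrac{x}{x+1/2}B(x,\tfrac12)$. There are two genuine differences. First, for part i) the paper simply cites Theorem 2.1 (the Bhakta--Musina result) to get attainment of $\phi_p(\alpha)$, whereas you re-derive it by running concentration--compactness on the line; your route is self-contained but reproves an imported lemma, while the paper's is shorter. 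Second, and more usefully, for the explicit value you exploit the Nehari identity $\int(|v''|^2+K_2|v'|^2+K_0v^2)\,dt=\int v^{p+1}\,dt$ so that $\phi_p(\alpha)=\bigl(\int v^{p+1}\bigr)^{(p-1)/(p+1)}$ and only the single integral $\int_{\mathbb{R}}(\cosh\nu t)^{m(p+1)}\,dt$ needs computing; the paper instead evaluates the numerator directly by what it calls a ``complex and tedious calculation,'' and your shortcut reproduces the same $\phi_{i,p}(\alpha)$ (one checks $4-\tfrac{p-1}{p+1}=3+\tfrac{2}{p+1}$, matching the paper's exponent of $\nu$). Your closing caveat about the borderline $K_0=0$ is a fair observation that applies to the paper as well, since conditions $(C_1)$--$(C_3)$ only guarantee $K_0\geq 0$ while Theorem 2.1 is stated for positive constants $A,B$; the paper does not address this degenerate case either. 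One minor slip: you appeal to ``the classification recalled in the introduction'' to exclude constant and periodic solutions, but that classification is stated for (1.7), not (1.8); the correct and sufficient justification is the one you also give, namely that the constant and periodic branches are not in $H^2(\mathbb{R})$ and the $H^2$ solution is unique by Theorem 2.1 ii).
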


  For the radial solutions of (1.1) with non-removable singularity at origin, we have the following periodicity results.
\begin{thm}
Assume $-2<\alpha<n-4$, $\lambda \leq (n-2)(\alpha+2)-2\sqrt{\mu}$, $\mu>0$, $u\in C^{4}(\mathbb{R}^{N}\backslash\{0\})$ is a positive radial solution of equation (1.1) with non-removable singularity at origin. Define $v(t)=|x|^{\frac{n-4-\alpha}{2}}u(|x|)$ with $t=-\ln(|x|)$, then\\
i) $v$ is periodic;\\
ii) Let $a \in (0,l)$, $l:=K_0^{\frac{1}{p-1}}$, then there exists a unique (up to translations) periodic solution $v$ of (1.9) with minimal value $a$.

\label{thm-3}
\end{thm}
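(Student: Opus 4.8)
The plan is to work entirely with the autonomous fourth‑order ODE \eqref{1.8}. First I would record that \eqref{1.8} is the Euler--Lagrange equation of $\int\big[\tfrac12 (v'')^2+\tfrac{K_2}{2}(v')^2+\tfrac{K_0}{2}v^2-\tfrac{1}{p+1}v^{p+1}\big]\,dt$, so it carries the conserved Hamiltonian
\[
E:=\tfrac12 (v'')^2-v'v'''+\tfrac{K_2}{2}(v')^2-\tfrac{K_0}{2}v^2+\tfrac{1}{p+1}v^{p+1},\qquad \tfrac{d}{dt}E=-v'\big(v^{(4)}-K_2v''+K_0v-v^p\big)=0 .
\]
I would also note that \eqref{1.8} is reversible: it is invariant under $t\mapsto 2t_0-t$, so the reflection $v(2t_0-t)$ of a solution is again a solution, and the fixed set of the reversing involution is $\mathrm{Fix}=\{v'=v'''=0\}$. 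The nonnegative equilibria are $v\equiv 0$ and $v\equiv l=K_0^{1/(p-1)}$ (with $K_0>0$ under the hypotheses, which are exactly condition $(C_1)$). Linearizing at $l$ gives $\kappa^4-K_2\kappa^2-(p-1)K_0=0$, whose two $\kappa^2$‑roots have opposite sign, so $l$ is a saddle–centre with a purely imaginary pair $\pm i\omega$; linearizing at $0$ gives $\kappa^4-K_2\kappa^2+K_0=0$, and since $K_2\ge 0$, $K_0\ge 0$, $K_2^2-4K_0\ge 0$ all four roots are real, so $0$ has no oscillatory directions. This eigenvalue picture is what produces oscillation around $l$ but monotone behaviour near $0$.

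The first substantial step is to show that a global positive solution with non‑removable singularity satisfies $0<m\le v(t)\le M<\infty$ on $\mathbb{R}$. The lower bound is where non‑removability enters: solutions with removable singularity are precisely those with $v(t)\to 0$ as $t\to+\infty$ (decay of $u$ at the origin), so the hypothesis forces $\limsup_{t\to+\infty}v>0$; I would then upgrade this to a uniform positive lower bound, using that the only way the trajectory can approach $0$ is along the non‑oscillatory stable directions of the rest point $0$ on the fixed energy level. For the upper bound I would argue by contradiction: if $v$ were unbounded, the balance $v^{(4)}\approx v^{p}$ near large values is incompatible with $v$ being a global $C^4$ solution on the fixed level $\{E=\text{const}\}$, which I would make quantitative through a Pohozaev/integral identity on long intervals combined with the constancy of $E$. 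Once $v$ is trapped in $[m,M]$, the equation bootstraps to uniform bounds on $v',v'',v'''$, so the orbit stays in a compact subset of phase space. I expect this boundedness step—especially the uniform upper bound, which cannot use finiteness of the variational norm since the solution is singular—to be the main obstacle.

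With $v$ confined to a compact part of the energy surface, I would show $v$ is neither eventually monotone nor convergent to an equilibrium. Convergence to $0$ is excluded by the lower bound; convergence to $l$ is excluded because the local stable set of the saddle–centre $l$ on $\{E=\text{const}\}$ is at most one‑dimensional, while its centre directions carry a one‑parameter Lyapunov family of periodic orbits, so a non‑constant orbit lingering near $l$ cannot converge to it (energy conservation plus reversibility). Hence $v$ oscillates, with infinitely many alternating local maxima and minima. I would then use reversibility to pass from oscillation to periodicity: the key sub‑step is that $v'''=0$ at each local extremum of $v$, so that at such a point $t_0$ the trajectory lies in $\mathrm{Fix}=\{v'=v'''=0\}$; then $v(t)$ and its reflection $v(2t_0-t)$ share full Cauchy data at $t_0$ and coincide by uniqueness, giving symmetry about $t_0$, and two consecutive extrema in $\mathrm{Fix}$ force periodicity with period twice their separation. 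Proving that the orbit meets $\mathrm{Fix}$ at the extrema is the delicate point, which I would extract from $E$ restricted to critical points of $v$ together with the monotonicity of $v$ between consecutive critical points.

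For (ii) I would construct the periodic solutions by reversible shooting from $\mathrm{Fix}$. Prescribing the minimum at $t=0$ means starting at $(v,v',v'',v''')=(a,0,b,0)$ with $a\in(0,l)$ and $b\ge 0$; by reversibility the resulting trajectory is periodic iff it returns to $\mathrm{Fix}$ at a later time. Fixing the level by $E=\tfrac12 b^2-\tfrac{K_0}{2}a^2+\tfrac{1}{p+1}a^{p+1}$ ties $b$ to $a$, and I would show, via the imaginary pair $\pm i\omega$ at $l$ and a phase‑space continuation argument, that for each $a\in(0,l)$ there is exactly one closed orbit encircling $l$ with minimum $a$, yielding existence on the full range and, by strict monotonicity of the minimum along the one‑parameter family of closed orbits, uniqueness up to translation. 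The principal difficulty here is global rather than local: continuing the small Lyapunov orbits near $l$ all the way down to $a\to 0^+$ (and up to $a\to l^-$) without the family degenerating, which I would control using the conserved energy $E$ as the continuation parameter.
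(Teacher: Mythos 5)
Your outline shares the paper's toolkit (the conserved energy $E_v$, linearization at the equilibria $0$ and $l$, reflection symmetry at extrema, shooting for part (ii)), but it diverges at the decisive step --- producing infinitely many zeros of $v'$ --- and there the gaps are real. The paper does not prove a priori two-sided bounds $0<m\le v\le M$; it argues by contradiction: if $v'$ had finitely many zeros, $v$ would be eventually monotone, hence (Lemma 2.1 of [HW20]) convergent to $0$ or $l$ at each end, and the four limit configurations are then excluded one by one. Your plan instead rests on global bounds for a possibly non-monotone singular solution, which you yourself identify as the main obstacle and for which you offer no argument; this is strictly harder than what is needed. Moreover, your exclusion of convergence to $l$ (``the local stable set of the saddle--centre is at most one-dimensional, so a non-constant orbit cannot converge to it'') is not a valid principle: orbits do converge to saddle--centres along their one-dimensional stable manifolds, and that is exactly what a homoclinic to $l$ would be. What actually kills this case is the energy comparison at a global extremum $t_0$ with $v(t_0)\ne l$: there $E_v(t_0)=\tfrac12 v''(t_0)^2+G(v(t_0))>G(l)=E_v(+\infty)$, contradicting constancy of $E_v$.

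The most serious omission concerns where non-removability and the hypotheses $\mu>0$, $\lambda\le(n-2)(\alpha+2)-2\sqrt{\mu}$ actually enter. It is not true that ``$v\to 0$ as $t\to+\infty$'' by itself contradicts non-removability, since $u=e^{\frac{n-4-\alpha}{2}t}v(t)$; one must upgrade $v\to 0$ to the sharp exponential rate $v(t)=O(e^{\lambda_4 t})$ with $\lambda_4=-\bigl(\tfrac12\bigl(K_2-\sqrt{K_2^2-4K_0}\bigr)\bigr)^{1/2}$, and then verify $-\lambda_4>\tfrac{n-4-\alpha}{2}$, which is precisely the computation $s^2-K_2s+K_0\big|_{s=(\frac{n-4-\alpha}{2})^2}=\mu>0$ together with $(n-2)(\alpha+2)-\lambda>0$. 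The paper's Proposition 2.3, Case 4, devotes a variation-of-parameters representation and a Gronwall-type iteration to exactly this upgrade; your gesture toward ``the non-oscillatory stable directions'' does not supply it, and without it the hypothesis of non-removable singularity is never used and the proof does not close. The remaining ingredients you invoke --- that $v'''=0$ at extrema so that reflection plus uniqueness gives symmetry and then periodicity, and the shooting-from-$\mathrm{Fix}$ construction for part (ii) --- are precisely the cited Lemma 2.5 of [HW20] and Theorem 1 of [FK19]; you correctly flag $v'''(t_0)=0$ as delicate but do not prove it, and your remark that fixing $E$ ``ties $b$ to $a$'' is backwards: $E$ is not prescribed, $b=v''(0)$ is the free shooting parameter and the content is that exactly one value of $b$ yields a closed orbit.
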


We have stated a result about the best constant in Theorem 1.2, as for the best constant, we usually concern about whether the function that achieves it is radially symmetric or not. So in the second part of this paper, we study the symmetry breaking phenomena of the equation
\begin{equation}\label{1.11}
\Delta(|x|^{-\alpha}\Delta u)+\lambda \text{div}(|x|^{-\alpha-2}\nabla u)+\mu|x|^{-\alpha-4}u=|x|^\beta |u|^{p-1}u\quad \text{in} \quad \mathbb{R}^n \backslash \{0\}.
\end{equation}
Denote
$$S_{\alpha,p}(\lambda,\mu):=\inf \limits_{u\in D^{2,2}(\mathbb{R}^{n},\alpha)\backslash\{0\}}\frac{\int_{\mathbb{R}^{n}}|x|^{-\alpha}|\Delta u|^{2}-\lambda|x|^{-\alpha-2}|\nabla u|^{2}+\mu|x|^{-\alpha-4}u^2 dx}{(\int_{\mathbb{R}^{n}}|x|^{\beta}|u|^{p+1} dx)^{\frac{2}{p+1}}}$$
$$S_{\alpha,p}^{rad}(\lambda,\mu):=\inf \limits_{u\in D_{rad}^{2,2}(\mathbb{R}^{n},\alpha)\backslash\{0\}}\frac{\int_{\mathbb{R}^{n}}|x|^{-\alpha}|\Delta u|^{2}-\lambda|x|^{-\alpha-2}|\nabla u|^{2}+\mu|x|^{-\alpha-4}u^2 dx}{(\int_{\mathbb{R}^{n}}|x|^{\beta}|u|^{p+1} dx)^{\frac{2}{p+1}}}$$
where $D^{2,2}(\mathbb{R}^{n},\alpha)$ is the completion of $C_0^{\infty}(\mathbb{R}^{n}\backslash \{0\})$ in the norm $\||u\||_{\alpha}^{2}=\int_{\mathbb{R}^{n}}|x|^{-\alpha}|\Delta u|^2 dx.$
  Now we can write down our main result about the symmetry breaking.
\begin{thm}
Assume that $n \geq 5$, $\lambda<0$ and $\mu>\frac{(n-4-\alpha)^2}{4} \lambda$, then there exists $\bar{\alpha}>0$ and $\bar{p}\in (2,2^{**})$ depend only on $\lambda,\mu$,such that if $p\in (\bar{p}-1,2^{**}-1)$ and $|\alpha|<\bar{\alpha},$ then $S_{\alpha,p}(\lambda,\mu)<S_{\alpha,p}^{rad}(\lambda,\mu).$
\label{thm-4}
\end{thm}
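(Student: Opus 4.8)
\emph{Strategy.} Since $p+1<2^{**}$ is strictly subcritical, the Emden--Fowler substitution $u(x)=|x|^{-\frac{n-4-\alpha}{2}}w(-\ln|x|,\theta)$ turns the radial functional into an autonomous (in $s$) problem on the cylinder $\mathbb{R}\times S^{n-1}$ whose embedding into the relevant $L^{p+1}$ space is compact; hence $S_{\alpha,p}^{rad}(\lambda,\mu)>0$ is attained by a positive radial critical point $u_r$, smooth and with power decay away from the origin. To prove $S_{\alpha,p}(\lambda,\mu)<S_{\alpha,p}^{rad}(\lambda,\mu)$ it then suffices to produce one non-radial $\phi\in D^{2,2}(\mathbb{R}^n,\alpha)$ along which the second variation of the quotient at $u_r$ is strictly negative. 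Normalizing $u_r$ so that the Lagrange multiplier equals $1$, this second variation is
\[
\mathcal{Q}[\phi]=\int_{\mathbb{R}^n}\Big(|x|^{-\alpha}|\Delta\phi|^2-\lambda|x|^{-\alpha-2}|\nabla\phi|^2+\mu|x|^{-\alpha-4}\phi^2-p\,|x|^\beta u_r^{p-1}\phi^2\Big)\,dx .
\]

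\emph{Reduction to the first angular mode.} Expanding $\phi=\psi(r)Y_k(\theta)$ in spherical harmonics, $-\Delta_{S^{n-1}}Y_k=\lambda_kY_k$ with $\lambda_k=k(k+n-2)$, the form $\mathcal{Q}$ diagonalizes into one-dimensional forms $\mathcal{Q}_k[\psi]$. On the sector $k=0$ one has $\mathcal{Q}_0\ge 0$ because $u_r$ minimizes among radial functions. Since $|\Delta(\psi Y_k)|^2$ contributes the term $\lambda_k^2\,r^{-4}\psi^2$ with the positive weight $|x|^{-\alpha}$, each $\mathcal{Q}_k$ is a quadratic polynomial in $\lambda_k$ with positive leading coefficient, so $\mathcal{Q}_k>0$ for all large $k$. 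Consequently the only sector that can destabilize $u_r$ is $k=1$, and the whole theorem reduces to exhibiting a negative direction there.

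\emph{The distinguished test function.} For the $k=1$ sector I use the field $\phi_\ast:=\partial_{x_1}u_r=\theta_1\,u_r'(r)$, which is admissible (the decay of $u_r$ gives $\phi_\ast\in D^{2,2}(\mathbb{R}^n,\alpha)$) and automatically satisfies the constraint inherited from the denominator, being angularly orthogonal to the radial ground state. Writing $N[u]=\Delta(|x|^{-\alpha}\Delta u)+\lambda\operatorname{div}(|x|^{-\alpha-2}\nabla u)+\mu|x|^{-\alpha-4}u$ and $L$ for the full linearization, I differentiate the Euler--Lagrange equation $N[u_r]=|x|^\beta u_r^p$ in $x_1$. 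Because $\Delta(|x|^{-\alpha}\Delta\cdot)$ commutes with $\partial_{x_1}$ only up to the $x$-dependence of the weights, this yields
\[
L[\phi_\ast]=-[\partial_{x_1},N]\,u_r+\beta\,x_1|x|^{\beta-2}u_r^{p}=:R,
\]
and therefore $\mathcal{Q}[\phi_\ast]=\langle L\phi_\ast,\phi_\ast\rangle=\langle R,\phi_\ast\rangle$. The right-hand side is an explicit finite sum of weighted radial integrals of $u_r$ and $u_r'$: at the reference configuration $\alpha=0$, $p=2^{**}-1$ (so $\beta=0$) the commutator $[\partial_{x_1},N]$ is generated solely by the lower-order operators $\lambda\operatorname{div}(|x|^{-2}\nabla\cdot)+\mu|x|^{-4}$, and after integrating by parts (using the decay of $u_r$) the integrals assemble, with the Emden--Fowler exponent $\gamma=\frac{n-4-\alpha}{2}$, into a coefficient proportional to $\mu-\gamma^2\lambda$ together with a separate multiple of $\lambda$. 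The hypotheses $\lambda<0$ and $\mu>\gamma^2\lambda=\frac{(n-4-\alpha)^2}{4}\lambda$ are exactly what force this combination to be strictly negative, i.e. $\mathcal{Q}[\phi_\ast]<0$ at the reference. Since $\mathcal{Q}[\phi_\ast]$ depends continuously on $(\alpha,p)$ (the $\alpha$- and $\beta$-commutators contribute only small corrections near $(0,2^{**}-1)$), strict negativity persists for $|\alpha|<\bar\alpha$ and $p\in(\bar p-1,2^{**}-1)$, with $\bar\alpha,\bar p$ depending only on $(\lambda,\mu)$; this yields the claim.

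\emph{Main obstacle.} The crux is the sign computation $\langle R,\phi_\ast\rangle<0$: because $u_r$ is not explicit for general $(\lambda,\mu)$ in the admissible cone, one cannot evaluate the weighted integrals directly and must instead reduce them, via integration by parts and the radial Euler--Lagrange and Pohozaev identities for $u_r$, to a single definite-sign expression in $\lambda$ and $\mu-\gamma^2\lambda$, controlled uniformly for $(\alpha,p)$ near $(0,2^{**}-1)$. A secondary point is making the compactness and near-critical limit analysis precise enough that $u_r$, its decay, and hence the admissibility of $\phi_\ast$ and the continuity of $\mathcal{Q}[\phi_\ast]$ in $(\alpha,p)$ all hold with constants depending only on $(\lambda,\mu)$.
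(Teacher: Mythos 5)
Your strategy (second variation of the Rayleigh quotient at the radial minimizer $u_r$, spherical-harmonic decomposition, and testing the $k=1$ mode with $\phi_\ast=\partial_{x_1}u_r$) is a legitimate Felli--Schneider-type route to symmetry breaking, but as written it has a genuine gap at exactly the point you yourself flag as the ``main obstacle'': the sign of $\mathcal{Q}[\phi_\ast]=\langle R,\phi_\ast\rangle$ is asserted, not proved. Since $u_r$ is not explicit for general $(\lambda,\mu)$, the weighted integrals making up $\langle R,\phi_\ast\rangle$ at the reference configuration $\alpha=\beta=0$, namely multiples of $\int 2\lambda\operatorname{div}(x_1|x|^{-4}\nabla u_r)\,\phi_\ast\,dx$ and $\int 4\mu\, x_1|x|^{-6}u_r\,\phi_\ast\,dx$, do not visibly ``assemble into a coefficient proportional to $\mu-\gamma^2\lambda$ plus a separate multiple of $\lambda$'': after integrating by parts they produce several independent radial moments of $u_r$ and $u_r'$ whose relative sizes you do not control, and even the zeroth-order contribution $-\tfrac{2\mu\omega_n(n-6)}{n}\int_0^\infty r^{n-7}u_r^2\,dr$ changes sign across $n=6$ and with the sign of $\mu$ (which the hypotheses allow to be negative). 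Without this computation the proof does not exist. There are also unaddressed secondary points: the continuity of $(\alpha,p)\mapsto u_r$, hence of $\mathcal{Q}[\phi_\ast]$, requires uniqueness and nondegeneracy of the radial minimizer near $(0,2^{**}-1)$; the admissibility $\partial_{x_1}u_r\in D^{2,2}(\mathbb{R}^n,\alpha)$ requires the precise rates of $u_r$ at $0$ and $\infty$; and the claim that only the $k=1$ sector can destabilize $u_r$ is unjustified (though not actually needed).

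The paper avoids all of this with a much softer argument. Lemma 3.1 shows, via Hardy's inequality and translating test functions to infinity, that at the limiting configuration $S_{0,2^{**}-1}(\lambda,\mu)=S^{**}$ and is \emph{not} attained when $\lambda<0$ and $\mu>\tfrac{(n-4)^2}{4}\lambda$, whereas $S^{rad}_{0,2^{**}-1}(\lambda,\mu)$ \emph{is} attained by Theorem 2.1; hence $S_{0,2^{**}-1}(\lambda,\mu)<S^{rad}_{0,2^{**}-1}(\lambda,\mu)$. Lemma 3.2 then provides the semicontinuity $\limsup S_{\alpha,p}\le S_{0,2^{**}-1}$ and $S^{rad}_{0,2^{**}-1}\le\liminf S^{rad}_{\alpha,p}$ as $(\alpha,p)\to(0,(2^{**}-1)_-)$, so the strict gap persists for $|\alpha|<\bar\alpha$ and $p$ near $2^{**}-1$. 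Note that the hypotheses $\lambda<0$, $\mu>\tfrac{(n-4-\alpha)^2}{4}\lambda$ enter there through non-attainment of the non-radial infimum at the critical point, not through a sign in a commutator integral. To salvage your approach you would have to actually carry out and uniformly control the moment computation near $(0,2^{**}-1)$, which is substantially harder than the paper's argument.
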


  The paper is organized as follows. In Section 2, we will study the classification of positive radial solutions and prove Theorem 1.1-Theorem 1.3. In Section 3, we will study the symmetry breaking phenomena and prove two important lemmas, and at the end of this section, we prove Theorem 1.4.

\section{Classification of Positive Radial Solutions and Proofs of Theorem 1.1-Theorem 1.3}

    In this section, we will prove the first three theorems. Recall that we have defined
\begin{equation}
\|u\|_{\alpha} ^2:=\int_{\mathbb{R}^n}|x|^{-\alpha}|\Delta u|^2-\lambda |x|^{-\alpha -2}|\nabla u|^2+\mu |x|^{-\alpha -4}u^2 dx.
\end{equation}
Now we show that $\|\cdot\|_\alpha$ is a norm of $C_{0,rad}^{\infty}(\mathbb{R}^n\backslash \{0\})$ if $\lambda$, $\mu$ satisfies (1.11) or (1.12). For this purpose, we need the following results. 

Denote $\|f\|_{\alpha,1}^{2}:=\int_{\mathbb{R}^{n}}|x|^{-\alpha}f^2 dx$ and assume $u(x)=u(|x|)$, then we have
\begin{equation}\label{2.2}
\|\Delta u\|_{\alpha,1}^2=\frac{(n+\alpha)^2}{4}\|\nabla u\|_{\alpha+2,1}^{2}+\frac{(n-4-\alpha)^2}{4}\|T_{\alpha+2}u\|_{\alpha+2,1}^2+\|T_{\alpha}\circ T_{\alpha+2}u\|_{\alpha,1}^{2},
\end{equation}
and
\begin{equation}\label{2.3}
\|\nabla u\|_{\alpha,1}^2=\frac{(n-2-\alpha)^2}{4}\|u\|_{\alpha+2,1}^2+\|\nabla(r^{\frac{n-2-\alpha}{2}}u)\|_{n-2,1}^2,
\end{equation}
where $T_{\alpha}u=u'+\frac{n-2-\alpha}{2r}u$. The above two equalities can be found in \cite{HY}.

Direct computation shows 
\begin{equation}\label{2.4}
|\nabla(r^{\frac{n-4-\alpha}{2}}u)|=r^{\frac{n-4-\alpha}{2}}|T_{\alpha+2}u|,\quad\quad \|\nabla(r^{\frac{n-4-\alpha}{2}}u)\|_{n-2,1}^2=\|T_{\alpha+2}u\|_{\alpha+2,1}^2,
\end{equation}
combining with (2.3), we obtain
\begin{equation}\label{2.5}
\|\nabla u\|_{\alpha+2,1}^2=\frac{(n-4-\alpha)^2}{4}\|u\|_{\alpha+4,1}^2+\|T_{\alpha+2}u\|_{\alpha+2,1}^2.
\end{equation}

We claim

\begin{align*}
\|u\|_{\alpha}^2 &=\left [\mu+\frac{(n+\alpha)^2(n-4-\alpha)^2}{16}-\frac{(n-4-\alpha)^2}{4} \lambda \right ]\|u\|_{\alpha+4,1}^2 \nonumber\\
& +\left [\frac{(n+\alpha)^2}{4}-\lambda +\frac{(n-4-\alpha)^2}{4} \right ]\|T_{\alpha+2} u\|_{\alpha+2,1}^2\nonumber\\
& +\|T_{\alpha}\circ T_{\alpha+2} u\|_{\alpha,1}^2.\nonumber\\
\end{align*}
In fact
\begin{align*}
\|u\|_{\alpha}^2 & =\|\Delta u\|_{\alpha,1}^2-\lambda\|\nabla u\|_{\alpha+2,1}^2+\mu\|u\|_{\alpha+4,1}^2\notag \\
& =\frac{(n+\alpha)^2}{4}\|\nabla u\|_{\alpha+2,1}^{2}+\frac{(n-4-\alpha)^2}{4}\|T_{\alpha+2}u\|_{\alpha+2,1}^{2}+\|T_{\alpha} \circ T_{\alpha+2}u\|_{\alpha,1}^{2}\notag \\ 
& -\lambda \|\nabla u\|_{\alpha+2,1}^{2}+\mu \|u\|_{\alpha+4,1}^{2}\notag \\
& =\left (\frac{(n+\alpha)^2}{4}-\lambda \right )\|\nabla u\|_{\alpha+2,1}^{2}+\frac{(n-4-\alpha)^2}{4}\|T_{\alpha+2}u\|_{\alpha+2,1}^2+\|T_{\alpha}\circ T_{\alpha+2} u\|_{\alpha,1}^2\notag \\
& +\mu \|u\|_{\alpha+4,1}^2\notag \\
& =\left (\frac{(n+\alpha)^2}{4}-\lambda \right )\left [\frac{(n-4-\alpha)^2}{4}\|u\|_{\alpha+4,1}^2+\|T_{\alpha+2} u\|_{\alpha+2,1}^2 \right ]\notag \\
& +\frac{(n-4-\alpha)^2}{4}\|T_{\alpha+2}u\|_{\alpha+2,1}^2+\|T_{\alpha}\circ T_{\alpha+2} u\|_{\alpha,1}^2+\mu \|u\|_{\alpha+4,1}^2.\notag \\
\end{align*}

It is easy to see our claim is true, so if $\lambda \leq \frac{4\mu}{(n-4-\alpha)^2}+\frac{(n+\alpha)^2}{4}$ and $\mu \leq (\frac{n-4-\alpha}{2})^4$ or $\lambda \leq \frac{(n-4-\alpha)^2}{4}+\frac{(n+\alpha)^2}{4}$ and $\mu > \left (\frac{n-4-\alpha}{2} \right )^4$, $ \|\cdot\|_{\alpha}$ is a norm of $C_{0,rad}^{\infty}(\mathbb{R}^{n}\backslash \{0\}).$

 \vspace{0.3 cm}

To prove Theorem 1.1, we need the following key result, which was proved in \cite{BM12}:
\begin{thm}
Let $q>1$ and let $A$, $B$ are two given positive constants. Then\\
i) The ordinary differential equation
\begin{equation}
w^{(4)}-Aw''+Bw=|w|^{q-1}w\quad \text{on}\quad\mathbb{R}
\end{equation}
has at least a nontrivial solution $w\in H^2(\mathbb{R})$. More precisely, $w$ achieves
$$\inf_{w\in H^2(\mathbb{R})\backslash\{0\}}\frac{\int|w''|^2+A|w'|^2+Bw^2 ds}{(\int|w|^{q+1} ds)^{2/(q+1)}}.$$
ii) If in addition $A^2-4B\geq 0$, then $w$ is the unique (up to translations, inversion $s \mapsto -s$ and change of sign) nontrivial solution to (2.8) in $H^2(\mathbb{R})$. Moreover, $w$ can be taken to be even, positive and strictly decreasing on $\mathbb{R}_{+}$.

\end{thm}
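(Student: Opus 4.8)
The plan is to handle part (i) by the direct method of the calculus of variations and part (ii) through the factorization of the fourth-order operator, which is exactly what the hypothesis $A^2-4B\ge 0$ makes available. For part (i), I would first note that since $A,B>0$ the quadratic form
\[
\|w\|_*^2 := \int_{\mathbb{R}} \left(|w''|^2 + A|w'|^2 + Bw^2\right)\,ds
\]
is positive definite and equivalent to the standard $H^2(\mathbb{R})$ norm. Because we are in one dimension, the embedding $H^2(\mathbb{R})\hookrightarrow L^{q+1}(\mathbb{R})$ holds for every $q>1$ (already $H^1(\mathbb{R})\hookrightarrow L^r$ for all $r\in[2,\infty]$), so the denominator of the quotient is continuous and bounded in terms of $\|w\|_*^2$; this shows the infimum $S$ is strictly positive. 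Taking a minimizing sequence $w_k$ normalized by $\int_{\mathbb{R}}|w_k|^{q+1}\,ds=1$, the relation $\|w_k\|_*^2\to S$ yields boundedness in $H^2(\mathbb{R})$.

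The only obstruction to compactness is the translation invariance of both integrals, so I would invoke Lions' concentration–compactness principle applied to the probability densities $\rho_k=|w_k|^{q+1}\,ds$. Vanishing is ruled out because it would force $\int_{\mathbb{R}}|w_k|^{q+1}\to 0$, contradicting the normalization. Dichotomy is ruled out by the strict subadditivity of the scaled problem $S_\mu=\mu^{2/(q+1)}S_1$: since $q>1$ gives exponent $2/(q+1)<1$, the map $\mu\mapsto\mu^{2/(q+1)}$ is strictly concave, whence $S_\mu<S_\theta+S_{\mu-\theta}$ for $0<\theta<\mu$. Therefore compactness holds: after a suitable translation, $w_k$ converges strongly in $H^2(\mathbb{R})$ to a minimizer $w$, and the Euler–Lagrange equation for the quotient, after a constant rescaling to normalize the nonlinearity, is precisely (2.8).

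For part (ii), I would exploit $A^2-4B\ge 0$ directly: the symbol $\xi^4+A\xi^2+B$ factors as $(\xi^2+a^2)(\xi^2+b^2)$ with $a^2+b^2=A$, $a^2b^2=B$ and $a,b>0$ real, so the operator splits as $(-\partial_s^2+a^2)(-\partial_s^2+b^2)$. Each second-order factor has the positive, even, exponentially decaying Green kernel $\tfrac{1}{2c}e^{-c|s|}$, so the fourth-order Green function $G$ is their convolution — again positive, even and symmetric-decreasing. Representing any $H^2$ solution as $w=G*(|w|^{q-1}w)$, the positivity of $G$ is used to show (up to the $w\mapsto -w$ symmetry) that solutions may be taken positive and decay exponentially; the evenness and monotonicity of $G$ then feed a moving-plane/sliding argument on the convolution equation, giving that any positive solution is symmetric about its unique maximum and strictly decreasing on each side. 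Note that for such symmetric solutions the inversion $s\mapsto -s$ acts trivially, so the symmetry group reduces to translations together with change of sign, matching the statement.

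The main obstacle is the uniqueness itself. Once symmetry and monotonicity are secured, I would reduce to the cooperative second-order system $-w''+b^2w=v$, $-v''+a^2v=w^q$ on the half-line with Neumann data $w'(0)=v'(0)=0$ and decay at infinity, and prove uniqueness by a shooting argument in which the flow of the initial value problem depends monotonically on the shooting parameter. Establishing this monotonicity (and, beforehand, ruling out genuinely sign-changing solutions) is where the real difficulty lies, and it is exactly here that the clean hypothesis $A^2\ge 4B$ — equivalently the reality of $a,b$ and the resulting positivity of $G$ — is indispensable, since otherwise the kernel changes sign and neither the positivity nor the comparison arguments survive. For the detailed execution of this final step I would rely on \cite{BM12}.
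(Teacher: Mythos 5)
First, a point of comparison that matters for this exercise: the paper offers no proof of this statement at all. Theorem~2.1 is imported verbatim from Bhakta--Musina \cite{BM12} (``which was proved in \cite{BM12}''), so there is no in-paper argument to measure yours against. On its own terms, your part (i) is essentially correct and is the standard route (also the one taken in \cite{BM12}): equivalence of the quadratic form with the $H^2(\mathbb{R})$ norm for $A,B>0$, the embedding $H^2(\mathbb{R})\hookrightarrow L^{q+1}(\mathbb{R})$, and concentration--compactness with dichotomy excluded by strict subadditivity of $\mu\mapsto\mu^{2/(q+1)}S_1$. One small compression: vanishing of the densities $\rho_k=|w_k|^{q+1}$ does not by itself contradict the normalization (their total mass remains $1$); you need Lions' vanishing lemma for the $H^1$-bounded sequence $w_k$ to upgrade $\sup_y\int_{B_R(y)}|w_k|^{q+1}\to 0$ to $w_k\to 0$ in $L^{q+1}$. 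That is a one-line repair.

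The genuine gap is in part (ii), which is the entire content of the theorem beyond a routine minimization. Your structural observations are right: $A^2-4B\ge 0$ gives the factorization $(-\partial_s^2+a^2)(-\partial_s^2+b^2)$ with $a,b>0$ real, and the resulting Green kernel is positive, even and decreasing. But the two assertions that make up part (ii) --- that every nontrivial $H^2$ solution is, up to sign, positive, and that the positive solution is unique modulo translations --- are precisely the steps you do not carry out. The identity $w=G*(|w|^{q-1}w)$ yields no positivity when the right-hand side changes sign, the moving-plane/sliding argument only starts once positivity is in hand, and the monotone dependence on the shooting parameter for the cooperative system is asserted rather than proved. You candidly flag these as ``where the real difficulty lies'' and defer to \cite{BM12}; that is honest, but it means your proposal establishes the easy half and cites the reference for the hard half, which is exactly what the paper itself does. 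To claim an independent proof you would need to supply the non-existence of sign-changing $H^2$ solutions and the uniqueness argument, neither of which follows from the ingredients you have assembled.
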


\noindent\textbf{Proof of Theorem 1.1}
Since $K_2^2-4K_0\geq 0$, we can apply Theorem 2.1 to get the existence and uniqueness of the positive radial solution of (1.1) in $\mathcal{N}_{rad}^{2}(\mathbb{R}^{n},\alpha)$. For the $C(\cosh \nu t)^{m}$-type solution, we set

$$v(t)=C(\cosh \nu t)^{m},$$
where $C,\nu$ and $m$ are to be determined later. Since $\cosh(\cdot)$ is even, so we can assume that $\nu >0$, direct computation shows
$$v''(t)=C\nu^{2}m^2 (\cosh\nu t)^{m}-Cm(m-1)\nu^{2}(\cosh\nu t)^{m-2}$$
and

\begin{align*}
v^{(4)}(t) &=C\nu^{4}m^4 (\cosh\nu t)^{m}-Cm(m-1)[m^2+(m-2)^2]\nu^{4}(\cosh\nu t)^{m-2} \\
& +Cm(m-1)(m-2)(m-3)\nu^{4}(\cosh\nu t)^{m-4}.\\
\end{align*}
Hence,

\begin{align*}
& v^{(4)}(t)-K_{2}v''(t)+K_{0}v(t)\\
& =C[\nu^{4}m^{4}-K_{2}m^{2}\nu^{2}+K_{0}](\cosh\nu t)^{m}-Cm(m-1)\nu^{2}[\left(m^2+(m-2)^{2} \right)\nu^{2}-K_{2}](\cosh \nu t)^{m-2} \\
& +Cm(m-1)(m-2)(m-3)\nu^{4}(\cosh \nu t)^{m-4}.
\end{align*}

We choose $\nu$ such that 
$$\nu^{2}=\frac{K_{2}}{m^2+(m-2)^{2}},$$
then if 
$$\nu^{4}m^{4}-K_{2}m^{2}\nu^{2}+K_{0}=0,$$
and
$$Cm(m-1)(m-2)(m-3)\nu^{4}(\cosh \nu t)^{m-4}=C^{p}(\cosh \nu t)^{mp},$$
$v$ is a solution. That is
$$m=-\frac{4}{p-1}, \quad C^{p-1}=m(m-1)(m-2)(m-3)\nu^{4},$$
and
$$\frac{K_{2}^{2}}{K_{0}}=\frac{[(p+1)^2+4]^2}{4(p+1)^2}.$$

When $\alpha=\beta>-2$ and $\lambda$, $\mu$ satisfies (1.13), or $(n+\alpha)(n+\beta)=(n-4-\alpha)^{2}$ with $\alpha<-2$ and $\lambda$, $\mu$ satisfies (1.14), the above equality holds.

For such a radial solution we have
$$|x|^{\frac{n-4-\alpha}{2}}u(|x|)=C(\cosh \nu t)^{m},\quad t=-\ln |x|,$$
i.e.
$$u(|x|)=\frac{C}{2^{m}}|x|^{-\gamma}(1+|x|^{2\nu})^{m},$$
where $\gamma=\frac{n-4-\alpha}{2}+\nu m$. The equality (1.2) implies that $p+1=\frac{2(n+\beta)}{n-4-\alpha}$.

\textit{Case 1.} If $\alpha=\beta>-2$ and (1.13) holds, then
$$m_{1}=-\frac{n-4-\alpha}{2+\alpha},\quad \nu_{1}=\frac{2+\alpha}{2}\sqrt{1-\frac{2\lambda}{(n-2)^2+(2+\alpha)^2}},$$
$$C_{1}^{p-1}=m_{1}(m_{1}-1)(m_{1}-2)(m_{1}-3)\nu_{1}^{4}.$$
Therefore
$$u_{1}(x)=\frac{C_{1}}{2^{m_{1}}}|x|^{-(\frac{n-4-\alpha}{2}+\nu_1 m_1)}(1+|x|^{2\nu_1})^{m_{1}}.$$

\textit{Case 2.} If $(n+\alpha)(n+\beta)=(n-4-\alpha)^2$ with $\alpha<-2$ and (1.14) holds, then
$$m_{2}=\frac{n+\alpha}{2+\alpha},\quad \nu_{2}=-\frac{2+\alpha}{2}\sqrt{1-\frac{2\lambda}{(n-2)^2+(2+\alpha)^2}},$$

$$C_{2}^{p-1}=m_{2}(m_{2}-1)(m_{2}-2)(m_{2}-3)\nu_{2}^{4}.$$
Thus

$$u_{2}(x)=\frac{C_{2}}{2^{m_{2}}}|x|^{-(\frac{n-4-\alpha}{2}+\nu_2 m_2)}(1+|x|^{2\nu_2})^{m_{2}}.$$
\hfill $\square$\par
\begin{rem}
In fact, if we use (1.2) and replace $p$ with $n$ and $\alpha$, (1.13) and (1.14) can be uniformly written as the following equality:
$$\lambda=\frac{[(n-2)^2+(2+\alpha)^2]\left[(n-2)(2+\alpha)(n-4-\alpha)+\sqrt{(n-2)^2(2+\alpha)^2(n-4-\alpha)^2+4(n+\alpha)^2\mu}\right]}{(n+\alpha)^2(n-4-\alpha)}.$$
\end{rem} \vspace{0.5 cm}
As mentioned above, in general case, the best constant $S_{rad}^{p}(\alpha,\beta)$ is not easy to calculate. But in the special case, we can calculate it, and in the proof of Theorem 1.2, we use the method in Huang-Wang's paper \cite{HW20} to calculate the best constant.  \vspace{0.3 cm}

\noindent\textbf{Proof of Theorem 1.2}
To any radial function $u\in \mathcal{N}_{rad}^2(\mathbb{R}^n,\alpha)$, we associate a function $v\in H^2(\mathbb{R})$ via the Emden-Fowler transform. Thus by direct caculation, there holds
$$u(x)=|x|^{-\frac{n-4-\alpha}{2}}v(-\ln|x|),$$
$$\int_{\mathbb{R}^n}|x|^{\beta}|u|^{p+1}dx=\omega_n\int_{-\infty}^{\infty}|v|^{p+1}dt,$$
and
$$\int_{\mathbb{R}^n}|x|^{-\alpha}|\Delta u|^2dx=\omega_n\int_{-\infty}^{\infty}|v''|^2+K_2|v'|^2+K_0v^2dt,$$
where $\omega_n$ is the measure of $\mathbb{S}^{n-1}$. In particular,
$$S_{p}^{rad}(\alpha,\beta)=\omega_n^{\frac{p-1}{p+1}}\phi_p(\alpha),$$
where
$$\phi_p(\alpha)=\inf_{v\in H^2(\mathbb{R})\backslash\{0\}}\frac{\int_{-\infty}^{\infty}|v''|^2+K_2|v'|^2+K_0 v^2 dt}{(\int_{-\infty}^{\infty}|v|^{p+1} dt)^{\frac{2}{p+1}}}.$$

By Theorem 2.1, $\phi_p(\alpha)$ hence $S_{p}^{rad}(\alpha,\beta)$ is achieved.

By Theorem 1.1, we have that for $\alpha=\beta>-2$, or $(n+\alpha)(n+\beta)=(n-4-\alpha)^2$ with $\alpha<-2$, then $v=C(\cosh \nu t)^m$. Direct caculation shows that for any number $\gamma$,

\begin{align*}
\int_{0}^{\infty}(\cosh \nu t)^{\gamma}dt &=\nu^{-1}\int_{0}^{\infty}(\cosh t)^{\gamma} dt=\nu^{-1}\int_{1}^{\infty}\frac{x^{\gamma}}{\sqrt{x^2-1}}dx\\
& =\nu^{-1}\int_{0}^{1}y^{-\gamma-1}\frac{1}{\sqrt{1-y^2}}dy=\frac{\nu^{-1}}{2}\int_{0}^{1}x^{-\frac{2+\gamma}{2}}(1-x)^{-\frac{1}{2}}dx\\
& =\frac{\nu^{-1}}{2}B(-\frac{\gamma}{2},\frac{1}{2})=\frac{\nu^{-1}}{2}\frac{\Gamma(-\frac{\gamma}{2})\Gamma(\frac{1}{2})}{\Gamma({\frac{1-\gamma}{2}})},\\
\end{align*}

so there holds
\begin{align*}
\int_{-\infty}^{\infty}|v|^{p+1}dt &=C^{p+1}\int_{0}^{\infty}(\cosh \nu t)^{m(p+1)}dt=C^{p+1}\nu^{-1}\frac{\Gamma(-\frac{m(p+1)}{2})\Gamma(\frac{1}{2})}{\Gamma(\frac{1-m(p+1)}{2})}\\
& =C^{p+1}\nu^{-1}\frac{4m(m-1)}{(2m-1)(2m-3)}\frac{\Gamma(-m)\Gamma(\frac{1}{2})}{\Gamma(\frac{1}{2}-m)}.
\end{align*}

On the other hand,
\begin{align*}
  & |v''|^2+K_2|v'|^2+K_0 v^2\\
=& 2K_2 C^2 \nu^2 m^2 (\cosh \nu t)^{2m}-2m^2 C^2 \nu^4[2m^2-3m+2](\cosh \nu t)^{2m-2}\\
+& C^2 m^2(m-1)^2\nu^4(\cosh \nu t)^{2m-4}.
\end{align*} 
By complex and tedious caculation, we get
$$\int_{-\infty}^{\infty}|v''|^2+K_2|v'|^2+K_0 v^2 dt=C^2 \nu^2 m^2 \frac{4(m-1)^2(m-2)(m-3)}{(2m-1)(2m-3)}\frac{\Gamma(-m)\Gamma(\frac{1}{2})}{\Gamma(\frac{1}{2}-m)}.$$
Therefore,
$$\phi_p(\alpha)=\nu^{3+\frac{2}{p+1}}m(m-1)(m-2)(m-3)\left[\frac{4m(m-1)}{(2m-1)(2m-3)}B(-m,\frac{1}{2})\right]^{\frac{p-1}{p+1}}.$$
\textit{Case 1.} For $\alpha=\beta>-2$ and (1.13) holds, using
$$\nu_1=\frac{2+\alpha}{2}\sqrt{1-\frac{2\lambda}{(n-2)^2+(2+\alpha)^2}},\quad m_1=-\frac{n-4-\alpha}{2+\alpha},\quad p+1=\frac{2(n+\alpha)}{n-4-\alpha}.$$
Then
$$\phi_{1,p}(\alpha)=\nu_1^{\frac{2(2n+\alpha-2)}{n+\alpha}}m_1(m_1-1)(m_1-2)(m_1-3)\left[\frac{4m_1(m_1-1)}{(2m_1-1)(2m_1-3)}B(-m_1,\frac{1}{2})\right]^{\frac{2(2+\alpha)}{n+\alpha}},$$
$$S_p^{rad}(\alpha,\alpha)=\omega_n^{\frac{2(2+\alpha)}{n+\alpha}}\phi_{1,p}(\alpha),$$
and $S_p^{rad}(\alpha,\alpha)$ is achieved by 
$$u_{1}(x)=\frac{C_{1}}{2^{m_{1}}}|x|^{-(\frac{n-4-\alpha}{2}+\nu_1 m_1)}(1+|x|^{2\nu_1})^{m_{1}}.$$
\textit{Case 2.} For $(n+\alpha)(n+\beta)=(n-4-\alpha)^2$ with $\alpha<-2$ and (1.14) holds, using
$$\nu_2=-\frac{2+\alpha}{2}\sqrt{1-\frac{2\lambda}{(n-2)^2+(2+\alpha)^2}},\quad m_2=\frac{n+\alpha}{2+\alpha},\quad p+1=\frac{2(n-4-\alpha)}{n+\alpha},$$
we have
$$\phi_{2,p}(\alpha)=\nu_2^{\frac{2(2n-\alpha-6)}{n-4-\alpha}}m_2(m_2-1)(m_2-2)(m_2-3)\left[\frac{4m_2(m_2-1)}{(2m_2-1)(2m_2-3)}B(-m_2,\frac{1}{2})\right]^{-\frac{2(2+\alpha)}{n-4-\alpha}},$$
and 
$$S_p^{rad}(\alpha,\beta)=\omega_n^{-\frac{2(2+\alpha)}{n-4-\alpha}}\phi_{2,p}(\alpha),$$
which is achieved by
$$u_{2}(x)=\frac{C_{2}}{2^{m_{2}}}|x|^{-(\frac{n-4-\alpha}{2}+\nu_2 m_2)}(1+|x|^{2\nu_2})^{m_{2}}.$$
\hfill $\square$\par

\vspace{0.5 cm}

To prove Theorem 1.3, we need the following proposition about the behavior of $v$ on $\mathbb{R}$. Suppose $u \in C^{4}(\mathbb{R}^{N}\backslash \{0\})$ is any positive radial solution of (1.1) with non-removable singularity at origin. Define $v(t)=|x|^{\frac{n-4-\alpha}{2}}u(|x|)$ with $t=-\ln|x|$, then $v(t)$ satisfies the equation (1.9).

\begin{prop}
Assume that $-2<\alpha<n-4$, $\lambda<(n-2)(\alpha+2)-2\sqrt{\mu}$, $\mu\geq 0$ and $v\in C^{4}$ is a positive entire solution to (1.9), then equation $v'(t)=0$ admits infinitely many roots in $\mathbb{R}$.
\end{prop}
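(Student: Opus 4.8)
The plan is to argue by contradiction: suppose $v'$ has only finitely many zeros. Then $v$ is strictly monotone on two half-lines $(T,\infty)$ and $(-\infty,-T)$, so the limits $L_\pm:=\lim_{t\to\pm\infty}v(t)$ exist in $[0,\infty]$, and the whole proof reduces to excluding every admissible value of $(L_+,L_-)$. First I would record what the hypotheses buy. A short computation gives $K_2^2-4K_0=\big((n-2)(\alpha+2)-\lambda\big)^2-4\mu$; since the coefficient of $\lambda$ in $K_0$ is negative, evaluating at the threshold $\lambda=(n-2)(\alpha+2)-2\sqrt{\mu}$ yields $K_0>\big(\tfrac{(n-4-\alpha)^2}{4}+\sqrt{\mu}\big)^2>0$, $K_2>\tfrac{(n-4-\alpha)^2}{2}+2\sqrt{\mu}>0$, and $K_2^2-4K_0>0$. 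In particular the positive equilibrium $l=K_0^{1/(p-1)}$ of (1.9) exists and the characteristic roots are real. The engine of the proof is the conserved Hamiltonian
$$E:=-v'v'''+\tfrac12(v'')^2+\tfrac{K_2}{2}(v')^2-\Phi(v),\qquad \Phi(s):=\tfrac{K_0}{2}s^2-\tfrac{1}{p+1}s^{p+1},$$
for which $E'=-v'\big(v^{(4)}-K_2v''+K_0v-v^p\big)=0$, together with the elementary fact that on $(0,\infty)$ the potential $\Phi$ is strictly increasing on $(0,l)$ and strictly decreasing on $(l,\infty)$, hence attains its global maximum \emph{only} at $s=l$.

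Next I would pin down $L_\pm$. The first task is boundedness, i.e. ruling out $L_\pm=\infty$. Since $-\Phi(v)=\tfrac{1}{p+1}v^{p+1}-\tfrac{K_0}{2}v^2\to+\infty$ as $v\to\infty$ while $E$ is constant, the derivative part $\tfrac12(v'')^2+\tfrac{K_2}{2}(v')^2-v'v'''$ would be forced to $-\infty$; using this together with the equation $v^{(4)}=K_2v''-K_0v+v^p$, in which the superlinear term $v^p$ dominates the lower-order terms once $v$ is large, I would convert the monotone escape to $+\infty$ into a finite-time blow-up, contradicting that $v$ is defined on all of $\mathbb{R}$. Once $v$ is bounded, standard ODE bootstrapping (the equation plus Landau–Kolmogorov interpolation on a half-line) makes $v,v',v'',v''',v^{(4)}$ bounded, and applying Barbalat's lemma successively — each $\int_T^\infty v^{(k)}$ converges and $v^{(k)}$ is uniformly continuous — yields $v',v'',v'''\to0$ at $\pm\infty$. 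Passing to the limit in (1.9) then gives $K_0L_\pm=L_\pm^{\,p}$, so $L_\pm\in\{0,l\}$, and conservation gives $E=E(L_+)=E(L_-)$ with $E(0)=0$ and $E(l)=-\Phi(l)<0$.

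Since $E(0)\ne E(l)$, the two limits must coincide, leaving exactly two cases. If $L_+=L_-=l$, then $v-l\to0$ at both ends; assuming $v\not\equiv l$, the function $|v-l|$ attains a positive maximum at some finite $t^\ast$, where necessarily $v'(t^\ast)=0$ while $v(t^\ast)\ne l$. Evaluating the conserved energy at $t^\ast$ gives $\tfrac12 v''(t^\ast)^2=\Phi(v(t^\ast))-\Phi(l)\le0$, because $\Phi(l)$ is the global maximum of $\Phi$ on $(0,\infty)$; hence $v(t^\ast)=l$, a contradiction, forcing $v\equiv l$ — but then $v'\equiv0$ has infinitely many zeros, contradicting the standing assumption. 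If instead $L_+=L_-=0$, then $v\in H^2(\mathbb{R})$ decays to zero at both ends, i.e. $v$ is homoclinic to $0$ (a ground-state-type profile); by the Emden–Fowler correspondence this forces the associated $u$ to have a \emph{removable} singularity at the origin, contradicting the non-removability under which $v$ was introduced. With all possibilities excluded, $v'$ must have infinitely many zeros.

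I expect the main obstacle to be the boundedness step. Turning the heuristic ``$v^p$ dominates, so $v$ cannot grow monotonically to $+\infty$'' into a rigorous finite-time blow-up statement for the fourth-order equation with positive lower-order data requires care (one may reduce it, via the factorization $(\partial_t^2-a^2)(\partial_t^2-b^2)v=v^p$ with $a>b>0$, to a superlinear differential inequality), as does the successive Barbalat argument that upgrades boundedness of $v$ to decay of all its derivatives. By contrast, the conceptual heart — the observation that $\Phi$ peaks exactly at the equilibrium $l$, which collapses the homoclinic-to-$l$ alternative to $v\equiv l$ — is short once the limits $L_\pm$ have been identified; and the remaining $L_\pm=0$ case is disposed of purely by the non-removability hypothesis rather than by the ODE itself.
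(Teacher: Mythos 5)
Your overall architecture matches the paper's: the conserved energy $E_v$, eventual monotonicity under the finite-zeros assumption, identification of the limits $L_\pm\in\{0,l\}$, exclusion of mixed limits by energy mismatch ($E(0)=0\neq -\Phi(l)=E(l)$), and exclusion of $L_\pm=l$ by evaluating $E$ at an interior critical point and using that $\Phi$ is maximized only at $l$ (the paper phrases this with $G=-\Phi$ minimized at $l$; the argument is identical). Your preliminary computation $K_2^2-4K_0=\bigl((n-2)(\alpha+2)-\lambda\bigr)^2-4\mu$ and the resulting signs of $K_0$, $K_2$ are correct.

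The genuine gap is in the case $L_+=L_-=0$, which you dispose of in one sentence: ``$v$ is homoclinic to $0$; by the Emden--Fowler correspondence this forces the associated $u$ to have a removable singularity.'' That implication is false as stated. Since $u(x)=e^{\frac{n-4-\alpha}{2}t}v(t)$ with $t=-\ln|x|\to+\infty$ as $x\to 0$, the information $v(t)\to 0$ only gives $u(x)=o\bigl(|x|^{-\frac{n-4-\alpha}{2}}\bigr)$, which is compatible with $u$ blowing up at the origin (e.g.\ $v\sim e^{-\varepsilon t}$ for small $\varepsilon>0$). To conclude $u=O(1)$ one must prove the \emph{sharp} decay rate $v(t)=O(e^{\lambda_4 t})$, where $\lambda_4=-\sqrt{\tfrac{1}{2}\bigl(K_2-\sqrt{K_2^2-4K_0}\bigr)}$ is the slowest stable characteristic root, and then verify $-\lambda_4>\frac{n-4-\alpha}{2}$. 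This is exactly what the paper's Case 4 does, via the variation-of-parameters representation of $v$ and a Gronwall-type iteration on $K_1,K_2$; and the verification $-\lambda_4>\frac{n-4-\alpha}{2}$ (equivalently $q(\tfrac{(n-4-\alpha)^2}{4})=\mu>0$ together with $\lambda<(n-2)(\alpha+2)$ for the quadratic $q(z)=z^2-K_2z+K_0$) is precisely where the hypotheses $\mu>0$, $-2<\alpha$ and $\lambda<(n-2)(\alpha+2)-2\sqrt{\mu}$ are consumed. Your proposal never uses these hypotheses beyond sign checks, which is the telltale sign that the hardest step has been skipped. (Secondarily, your route to $L_\pm\in\{0,l\}$ via blow-up and Barbalat is only sketched, but the paper likewise outsources this to Lemmas 2.1--2.2 of \cite{HW20}, so I would not count that against you in the same way.)
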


\begin{proof}

Define the energy function

\begin{equation}
E_{v}(t):=-v'(t)v'''(t)+\frac{1}{2}[v''(t)]^2+\frac{K_2}{2}[ v'(t)]^{2}-\frac{K_{0}}{2}v^2+\frac{1}{p+1}v^{p+1},
\end{equation}

then

\begin{equation}
\frac{dE_{v}(t)}{dt}=-v'(t)[v^{(4)}(t)-K_2 v''(t)+K_0 v-v^{p}]\equiv 0,
\end{equation}

i.e. $E_v (t)\equiv \mu_0.$
Assume that $v'(t)=0$ admits only finite roots in $\mathbb{R}$, then for $t$ large, $v$ is monotonous.
Denote
\begin{equation*}
v^{(4)}(t)-K_2 v''(t)=v^{p}(t)-K_0 v(t),
\end{equation*}
let $g(v)=v^{p}-K_0 v.$
Thus, by lemma 2.1 of \cite{HW20}, we have $v \to 0$ or $v \to l:=K_{0}^{\frac{1}{p-1}}$ as $t \to \pm \infty.$

Set $h(\pm \infty):=\lim_{t \to \pm \infty}h(t).$

\textit{Case 1.} If $v(-\infty)=0$, $v(+\infty)=l$. By lemma 2.2 of \cite{HW20}, we have
$$E_v (-\infty)=0, \quad E_v(+\infty)= -\frac{K_0}{2}l^2+\frac{1}{p+1} l^{p+1}=l^2 K_0(\frac{1}{p+1}-\frac{1}{2})<0,$$
which contradicts to (2.8).\\

\textit{Case 2.} If $v(-\infty)=l$, $v(+\infty)=0$, this case is similar to \textit{Case 1.} In fact, we can consider $v(-t).$

\textit{Case 3.} If $v(\pm \infty)=l$. As $v\not\equiv l$, there exists a global maximal point or minimal point $t_0$.
By (2.8), we have $E_v(+\infty)=E_v(t_0).$\\

On the other hand, $E_v (+\infty)=\frac{1}{p+1}l^{p+1}-\frac{K_0}{2} l^2=:G(l)$, with\\
$$G(s):=\frac{1}{p+1}s^{p+1}-\frac{K_0}{2}s^2.$$
$G'(s)=s^{p}-K_0 s=s(s^{p-1}-K_0)=s(s^{p-1}-l^{p-1})$ implies $G'(s)<0$ if $s<l$ and $G'(s)>0$ if $s>l$. Hence,
$$E_v(t_0)= \frac{1}{2}[v''(t_0)]^2+G(v(t_0))>\frac{1}{2}[v''(t_0)]^2+G(l) \geq G(l).$$ 
Contradiction.

\textit{Case 4.} If $v(\pm \infty)=0$. By direct computation, we have that the equation (1.9) has four eigenvalues
$$\lambda_1=\sqrt{\frac{K_2 +\sqrt{(\lambda-(n-2)(\alpha+2))^2-4\mu}}{2}},\quad \lambda_2=\sqrt{\frac{K_2 -\sqrt{(\lambda-(n-2)(\alpha+2))^2-4\mu}}{2}},$$
$$\lambda_3=-\sqrt{\frac{K_2 +\sqrt{(\lambda-(n-2)(\alpha+2))^2-4\mu}}{2}},\quad \lambda_4=-\sqrt{\frac{K_2 -\sqrt{(\lambda-(n-2)(\alpha+2))^2-4\mu}}{2}}.$$
Obviously, we have $\lambda_1>\lambda_2>0$, $\lambda_3<\lambda_4<0$.
By the variation of parametres method, the solution $v(t)$ to (1.9) is given by
\begin{align*}
v(t)& =\sum_{i=1}^4A_{i}e^{\lambda_{i}t}+\sum_{i=1}^4B_{i}\int_{0}^{t}e^{\lambda_{i}(t-s)}v^{p}(s) ds\\
& =A_1^{'}e^{\lambda_{1}t}+A_2^{'}e^{\lambda_2 t}+\sum_{i=3}^4 A_{i} e^{\lambda_{i} t}-\sum_{i=1}^2 B_i\int_{t}^{\infty}e^{\lambda_i(t-s)}v^{p}(s)ds\\
& +\sum_{i=3}^4 B_i \int_{0}^{t}e^{\lambda_i (t-s)}v^{p}(s)ds,\\
\end{align*}
where we have used the fact $e^{-\lambda_1 s}v^p(s)$, $e^{-\lambda_2 s}v^p(s)\in L^1(\mathbb{R}_+)$. Since $\lim_{t \to \infty}v(t)=0$, $A'_1=A'_2=0(\lambda_1 > \lambda_2 >0)$. Hence, there exists a constant $C>0$ such that for any $t \geq 0$, there holds
\begin{equation}
|v(t)|\leq Ce^{\lambda_4 t}+C\int_t^{\infty}e^{\lambda_2 (t-s)}|v^p(s)|ds+C\int_0^t e^{\lambda_4 (t-s)}|v^{p}(s)|ds,
\end{equation}
and for any $\delta >0$, there exists $M>0$, such that $|v^{p}(s)|\leq \delta |v(s)|$ if $s\geq M$. Then for any $t\geq M$,
\begin{align}
|v(t)| & \leq O(e^{\lambda_4 t})+C\delta \int_t^{\infty} e^{\lambda_2(t-s)}|v(s)|ds+C\delta \int_M^{t}e^{\lambda_4(t-s)}|v(s)|ds\nonumber\\
& =O(e^{\lambda_4 t})+C\delta K_1(t)+C\delta K_2(t),
\end{align}
where $K_1(t):=\int_M^t e^{\lambda_4(t-s)}|v(s)|ds$, $K_2(t):=\int_t^{\infty} e^{\lambda_2(t-s)}|v(s)|ds.$
Using (2.10), if we fix $\delta>0$ small enough such that $2C\delta \leq \min(\lambda_2,-\lambda_4)$, then we have
\begin{align*}
(K_1-K_2)'(t) & =|v(t)|+\lambda_4 K_1(t)-(-|v(t)|+\lambda_2 K_2(t))\\
& =2|v(t)|+\lambda_4 K_1(t)-\lambda_2 K_2(t)\\
& \leq 2C\delta(K_1+K_2)+O(e^{\lambda_4 t})+\lambda_4 K_1(t)-\lambda_2 K_2(t)\\
& \leq O(e^{\lambda_4 t}).
\end{align*}
As $\lim_{t \to \infty}v(t)=0$, we have readily
$$\lim_{t \to \infty}K_1(t)=\lim_{t \to \infty}K_2(t)=0.$$
Hence, $(K_2-K_1)(t)\leq O(e^{\lambda_4}t)(t \to \infty)$,
combine with (2.12), then
$$|v(t)| \leq O(e^{\lambda_4 t})+2C\delta K_1(t),$$
therefore,
$$K'_1(t)=|v(t)| +\lambda_4 K_1(t) \leq O(e^{\lambda_4 t})+(2C\delta+\lambda_4)K_1(t).$$
Thus,
$$K_1(t)=O(e^{(\lambda_4+2C\delta)t}).$$
By ODE theory, we have $|v(t)|=O(e^{(\lambda_4 +2C\delta)t}).$
Backing to (2.9), we have
$v(t)=O(e^{\lambda_4 t})$. Since $-\lambda_4>\frac{n-4-\alpha}{2}$, we get $u=O(1)$, which contradicts to that origin is non-removable singular point.

\end{proof}

Now we are ready to prove Theorem 1.3. \vspace{0.3 cm}

\noindent\textbf{Proof of Theorem 1.3}
$i)$ By Proposition 2.3, there are two real numbers $t_0<t_1$ such that $v'(t_0)=v'(t_1)=0$, $v'(t)\neq 0$ in $(t_0,t_1)$. By Lemma 2.5 of \cite{HW20}, $v$ must be periodic. That is, for any radial solution $u$ with
a non-removable singularity at origin, the function $v(-\ln|x|)=|x|^{\frac{n-4-\alpha}{2}}u(x)$ is periodic in $\ln |x|$.\\
$ii)$ The proof of this part is same as the proof of Theorem 1 in \cite{FK19}.    
\hfill $\square$\par

 \vspace{0.3 cm}

For the solution of a weighted equation, it is often difficult to determine whether its singularity is removable or not, the following proposition tells us that the singularity of the solution to (1.1) is non-removable if $\alpha$, $\lambda$, $\mu$ satisfy suitable condition.

\begin{prop}
Let $v(t)$ be the solution of (1.9)(which is given by Theorem 2.1), $u(|x|)=|x|^{-\frac{n-4-\alpha}{2}}v(t)$, $t=-\ln|x|$. If $-n<\alpha\leq -2$ and $\lambda >(n-2)(2+\alpha)+2\sqrt{\mu}$, then $u(0)=+\infty.$
\end{prop}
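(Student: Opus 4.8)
The plan is to read off the exact exponential decay rate of $v$ at $+\infty$ and compare it with the conformal weight $\frac{n-4-\alpha}{2}$. Since $v$ is the solution furnished by Theorem 2.1, it is positive, even and strictly decreasing on $\mathbb{R}_+$, so $v(t)\to 0$ as $t\to+\infty$, and the hypotheses place us in the regime where $K_2,K_0>0$. The linearization of (1.9) at the origin is $v^{(4)}-K_2v''+K_0v=0$, whose characteristic roots are $\pm\lambda_1,\pm\lambda_2$ with
$$\lambda_{1,2}^2=\frac{K_2\pm\sqrt{K_2^2-4K_0}}{2},\qquad K_2^2-4K_0=(\lambda-(n-2)(\alpha+2))^2-4\mu,$$
exactly as in the proof of Proposition 2.3. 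Under the present assumptions $K_2^2-4K_0>0$, so $\lambda_1>\lambda_2>0$. Writing $u=|x|^{-\frac{n-4-\alpha}{2}}v=e^{\frac{n-4-\alpha}{2}t}v(t)$, the statement reduces to two facts: (i) $v(t)$ decays exactly like $e^{-\lambda_2 t}$ with a positive coefficient, and (ii) $\lambda_2<\frac{n-4-\alpha}{2}$. Granting both, $u\sim Ce^{(\frac{n-4-\alpha}{2}-\lambda_2)t}\to+\infty$ as $t\to+\infty$, i.e. as $|x|\to 0$, which is $u(0)=+\infty$.

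For (ii) I would set $a=n-2$, $b=\alpha+2$, so that $\frac{n-4-\alpha}{2}=\frac{a-b}{2}$ and $K_2=\frac{a^2+b^2}{2}-\lambda$. A direct computation gives $K_2-\frac{(a-b)^2}{2}=ab-\lambda$, hence $\lambda_2^2<\big(\frac{a-b}{2}\big)^2$ is equivalent to $ab-\lambda<\sqrt{K_2^2-4K_0}$. The hypothesis $\lambda>(n-2)(2+\alpha)+2\sqrt{\mu}=ab+2\sqrt\mu$ forces $ab-\lambda<-2\sqrt\mu\le 0$, while $K_2^2-4K_0=(\lambda-ab)^2-4\mu>0$; so the inequality holds trivially and (ii) follows. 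This is precisely the reversal of the inequality $\lambda_2>\frac{n-4-\alpha}{2}$ exploited in Proposition 2.3, the two regimes being separated by the threshold $\lambda=(n-2)(2+\alpha)\pm 2\sqrt\mu$.

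The substantive point is (i). The Gronwall/bootstrap argument already carried out at the end of the proof of Proposition 2.3 yields the upper bound $v(t)=O(e^{-\lambda_2 t})$. For the matching lower bound I would use the Green's representation $v=G*v^p$, where $G$ is the decaying fundamental solution of $(\partial_t^2-\lambda_1^2)(\partial_t^2-\lambda_2^2)$, namely
$$G(t)=\frac{1}{\lambda_1^2-\lambda_2^2}\left(\frac{e^{-\lambda_2|t|}}{2\lambda_2}-\frac{e^{-\lambda_1|t|}}{2\lambda_1}\right),$$
which is strictly positive on $\mathbb{R}$ (since $\lambda_1>\lambda_2>0$) and behaves like a positive multiple of $e^{-\lambda_2|t|}$ at infinity. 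Because $v>0$, the weight $e^{\lambda_2 s}v^p(s)$ is integrable on $\mathbb{R}$ (using the established decay $v=O(e^{-\lambda_2|s|})$ and $p>1$), and a routine splitting of the integral together with dominated convergence gives
$$e^{\lambda_2 t}v(t)=\int_{\mathbb{R}} e^{\lambda_2 t}G(t-s)v^p(s)\,ds\longrightarrow \frac{1}{2\lambda_2(\lambda_1^2-\lambda_2^2)}\int_{\mathbb{R}} e^{\lambda_2 s}v^p(s)\,ds=:C>0.$$
Thus $v(t)\sim Ce^{-\lambda_2 t}$ with $C>0$, establishing (i).

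The main obstacle is exactly the positivity of the constant $C$ in (i): a priori the coefficient of the slowest mode $e^{-\lambda_2 t}$ could vanish, in which case $v$ would decay like $e^{-\lambda_1 t}$ and one could no longer conclude blow-up of $u$. The strict positivity of the Green's function $G$ together with $v^p>0$ is what forces $C>0$ and rules this out; this is the heart of the argument, the remaining pieces being the elementary algebra of (ii) and the passage to the Emden--Fowler variable.
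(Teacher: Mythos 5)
Your proof is correct, and it reaches the conclusion by a genuinely more direct route than the paper. Both arguments start from the same place: the Green's function (variation of parameters) representation of $v$ and the eigenvalue comparison $\lambda_2<\tfrac{n-4-\alpha}{2}$, which your computation with $a=n-2$, $b=\alpha+2$ establishes correctly (and which matches the paper's assertion $-\lambda_4<\tfrac{n-4-\alpha}{2}$). Where you diverge is in how the slow mode is shown to survive. The paper argues by contradiction: assuming $u(0)=O(1)$ forces the coefficients $B_2=B_4$ of the $e^{\pm\lambda_2 t}$ modes to vanish, reduces $v$ to a pure $\lambda_1$-mode representation satisfying $v''=-2B\lambda_1v^p+\lambda_1^2v$, and extracts an algebraic identity whose evaluation at $t=0$ and $t\to\infty$ is contradictory. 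You instead compute the limit of $e^{\lambda_2 t}v(t)$ directly and observe that it equals a constant multiple of $\int e^{\lambda_2 s}v^p(s)\,ds>0$, the positivity coming from the strict positivity of the kernel $G$ (equivalently, of the variation-of-parameters coefficient $1/P'(-\lambda_2)=1/\bigl(2\lambda_2(\lambda_1^2-\lambda_2^2)\bigr)$) together with $v>0$. This isolates the actual mechanism --- the coefficient of the slowest-decaying mode cannot vanish because the source $v^p$ is positive --- which the paper's identity obscures, and it yields the sharper conclusion $v(t)\sim Ce^{-\lambda_2 t}$ rather than only non-boundedness of $u$. Two small points you should make explicit: the a priori bound $v=O(e^{-\lambda_2|t|})$ needed for your dominated convergence step is proved in the paper only under the hypothesis of Proposition 2.3 ($\lambda<(n-2)(\alpha+2)-2\sqrt{\mu}$), so you should note that the bootstrap there uses only the reality and ordering $\lambda_1>\lambda_2>0$ of the roots and the vanishing of $v$ at infinity, all of which hold in the present regime $\lambda>(n-2)(\alpha+2)+2\sqrt{\mu}$ as well (with evenness of $v$ supplying the two-sided bound); and the identification $v=G*v^p$ requires the remark that the difference is a homogeneous solution decaying at both ends, hence zero, since no characteristic root is purely imaginary.
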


\begin{proof}
As the proof of proposition 2.3, we have
$$v(t)=\sum_{i=1}^{4}A_{i}e^{\lambda_{i}t}+\sum_{i=1}^{2}B_{i}\int_{t}^{\infty}e^{\lambda_{i}(t-s)}v^{p}(s)ds+\sum_{i=3}^{4}B_{i}\int_{-\infty}^{t}e^{\lambda_{i}(t-s)}v^{p}(s)ds.$$
Since $\lim_{t\to \pm \infty}v(t)=0$, we have $A_{1}=A_{2}=A_{3}=A_{4}=0$. The fact $v(t)=v(-t)$ implies that $B_{1}=B_{3}$, $B_{2}=B_{4}.$

We argue by contradiction. Assume $u(0)=O(1)$, which means $v(t)=O(e^{-\frac{n-4-\alpha}{2}t})$. Since $\lambda >(n-2)(2+\alpha)+2\sqrt{\mu}$, we have $-\lambda_4 < \frac{n-4-\alpha}{2}$, thus $B_{2}=B_{4}=0$. Therefore,
$$v(t)=B\int_{t}^{\infty}e^{\lambda_{1}(t-s)}v^{p}(s)ds+B\int_{-\infty}^{t}e^{\lambda_{3}(t-s)}v^{p}(s)ds,\quad for\quad some\quad B>0.$$
By direct computation, there holds 
$$v'(t)=B\lambda_{1}(\int_{t}^{\infty}e^{\lambda_{1}(t-s)}v^{p}(s)ds-\int_{-\infty}^{t}e^{\lambda_{3}(t-s)}v^{p}(s)ds),$$
$$v''(t)=-2B\lambda_{1}v^{p}(t)+\lambda_{1}^2 v(t),$$
$$v'''(t)=-2B\lambda_{1}pv^{p-1}(t)v'(t)+\lambda_{1}^2 v'(t),$$
$$v^{(4)}(t)=-2B\lambda_{1}p(p-1)v^{p-2}(t)[v'(t)]^2 -2B\lambda_{1}pv^{p-1}(t)v''(t)+\lambda_{1}^2 v''(t).$$
Since $K_{2}=\lambda_{1}^2+\lambda_{2}^2$, $K_{0}=\lambda_{1}\lambda_{2}$ and $v^{(4)}-K_{2}v''+K_{0}v=v^p$, we get
$$-2B\lambda_{1}p(p-1)(\frac{v'(t)}{v(t)})^2+4B^2\lambda_{1}^2 pv^{p-1}(t)-2B\lambda_{1}^3 p+2B\lambda_{1}\lambda_{2}^2=1.$$
For the case of $t=0$, $v'(0)=0$ and $t\to \infty$, $v(t)\to 0$, we have
$$4B^2 \lambda_{1}^2 pv^{p-1}(0)-2B\lambda_{1}^3 p+2B\lambda_{1}\lambda_{2}^2=1,$$
$$-2B\lambda_{1}^3 p+2B\lambda_{1}\lambda_{2}^2\geq 1,$$
which leads to
$$4B^2\lambda_{1}^2 pv^{p-1}(0)\leq 0.$$
A contradiction!

\end{proof}

\section{Symmetry Breaking and Proof of Theorem 1.4}

In this section, we will prove Theorem 1.4 by using the method used in [4], we first prove two lemmas and then, the conclusion of Theorem 1.4 is obvious.

Denote
$$S^{**}:=\inf \limits_{u\in C_{0}^{\infty}(\mathbb{R}^n)\backslash\{0\}}\frac{\int_{\mathbb{R}^{n}}|\Delta u|^2 dx}{(\int_{\mathbb{R}^{n}}|u|^{2^{**}}dx)^{\frac{2}{2^{**}}}},$$
where $2^{**}=\frac{2n}{n-4}$.

\begin{lem}
Assume $\lambda \leq 0$ and $\mu \geq \frac{(n-4)^2}{4} \lambda$, then $S_{0,2^{**}-1}(\lambda,\mu)=S^{**}$. Moreover, for any $\lambda <0$ and $\mu>\frac{(n-4)^2}{4} \lambda$, $S_{0,2^{**}-1}(\lambda,\mu)$ can not be achieved in $D^{2,2}(\mathbb{R}^{n}).$

\label{lem-1}
\end{lem}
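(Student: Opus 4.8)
The plan is to combine the Hardy-type identity (2.5) (specialized to $\alpha=0$), namely
$$\int_{\mathbb{R}^n}|x|^{-2}|\nabla u|^2\,dx=\frac{(n-4)^2}{4}\int_{\mathbb{R}^n}|x|^{-4}u^2\,dx+\|T_{2}u\|_{2,1}^2\ \ge\ \frac{(n-4)^2}{4}\int_{\mathbb{R}^n}|x|^{-4}u^2\,dx,$$
with a concentration argument. First I would establish the lower bound $S_{0,2^{**}-1}(\lambda,\mu)\ge S^{**}$. Since $\lambda\le 0$, multiplying the displayed inequality by $-\lambda\ge 0$ gives $-\lambda\int|x|^{-2}|\nabla u|^2\ge -\lambda\tfrac{(n-4)^2}{4}\int|x|^{-4}u^2$, so the numerator of the Rayleigh quotient defining $S_{0,2^{**}-1}(\lambda,\mu)$ satisfies
$$\int|\Delta u|^2-\lambda\int|x|^{-2}|\nabla u|^2+\mu\int|x|^{-4}u^2\ \ge\ \int|\Delta u|^2+\Big(\mu-\tfrac{(n-4)^2}{4}\lambda\Big)\int|x|^{-4}u^2.$$
By hypothesis $\mu\ge\frac{(n-4)^2}{4}\lambda$, so the coefficient of the last integral is nonnegative and the numerator is $\ge\int|\Delta u|^2\ge S^{**}(\int|u|^{2^{**}})^{2/2^{**}}$ by definition of $S^{**}$ (valid on $D^{2,2}(\mathbb{R}^n)$ by density). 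Dividing yields $S_{0,2^{**}-1}(\lambda,\mu)\ge S^{**}$.

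For the matching upper bound I would exploit that the weighted terms carry the same scaling as $\int|\Delta u|^2$ but are \emph{not} translation invariant. Fix a minimizing sequence $\{w_k\}\subset C_0^\infty(\mathbb{R}^n)$ for $S^{**}$, each with compact support of radius $\rho_k$ and normalized so that $\int|w_k|^{2^{**}}=1$. Translating $w_k$ by a vector $R_ke_1$ with $R_k\to\infty$ chosen large relative to $\rho_k$ pushes the support into a region where $|x|^{-2}$ and $|x|^{-4}$ are uniformly small; since $\int|\Delta w_k|^2$ and $\int|w_k|^{2^{**}}$ are translation invariant while
$$\int|x|^{-2}|\nabla w_k(\cdot-R_ke_1)|^2\le (R_k-\rho_k)^{-2}\int|\nabla w_k|^2,\qquad \int|x|^{-4}|w_k(\cdot-R_ke_1)|^2\le (R_k-\rho_k)^{-4}\int w_k^2,$$
a diagonal choice of $R_k$ forces both weighted integrals to $0$. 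Evaluating the quotient along this sequence gives $S_{0,2^{**}-1}(\lambda,\mu)\le S^{**}$, so that $S_{0,2^{**}-1}(\lambda,\mu)=S^{**}$.

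For the nonexistence claim, suppose $\lambda<0$, $\mu>\frac{(n-4)^2}{4}\lambda$, and that the infimum is attained by some $u^*\in D^{2,2}(\mathbb{R}^n)\setminus\{0\}$. By the Hardy–Rellich identity the integral $\int|x|^{-4}(u^*)^2$ is finite and, since $u^*\not\equiv 0$, strictly positive. Now the coefficient $\mu-\frac{(n-4)^2}{4}\lambda$ is \emph{strictly} positive, so the inequality of the first step becomes strict:
$$\int|\Delta u^*|^2-\lambda\int|x|^{-2}|\nabla u^*|^2+\mu\int|x|^{-4}(u^*)^2\ >\ \int|\Delta u^*|^2\ \ge\ S^{**}\Big(\int|u^*|^{2^{**}}\Big)^{2/2^{**}}.$$
Dividing by $(\int|u^*|^{2^{**}})^{2/2^{**}}$ gives $S_{0,2^{**}-1}(\lambda,\mu)>S^{**}$, contradicting the equality just proved; hence the infimum cannot be attained.

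The step I expect to require the most care is the upper bound. The crucial points are that the sharp constant in the Hardy inequality is exactly $\frac{(n-4)^2}{4}$ (so the lower bound is tight and is not an overestimate), which is transparent from the remainder term $\|T_2u\|_{2,1}^2$ in (2.5), and that the translated test functions remain admissible with genuinely finite and controllable weighted integrals. Using compactly supported minimizing functions $w_k$ rather than the explicit biharmonic Sobolev extremal neatly sidesteps the delicate global integrability of $|\nabla W|^2$ and $W^2$ in the low dimensions $n=5,6,7,8$; the remaining estimates are routine bookkeeping.
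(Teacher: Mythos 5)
Your proposal is correct and follows essentially the same route as the paper: the sharp Hardy inequality $\int|x|^{-2}|\nabla u|^2\,dx\ge\frac{(n-4)^2}{4}\int|x|^{-4}u^2\,dx$ for the lower bound, translation of test functions to infinity to kill the weighted terms for the upper bound, and strict positivity of the coefficient $\mu-\frac{(n-4)^2}{4}\lambda$ to rule out a minimizer. The only cosmetic difference is that you translate a minimizing sequence with a diagonal choice of $R_k$, whereas the paper translates each fixed $u\in C_0^{\infty}(\mathbb{R}^n)\setminus\{0\}$ and then takes the infimum over $u$, which avoids the diagonalization.
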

\begin{proof}
Using the Hardy inequality:
\begin{equation}
\int_{\mathbb{R}^n}|x|^{-\alpha-2}|\nabla u|^2 dx \geq \frac{(n-4-\alpha)^2}{4}\int_{\mathbb{R}^n}|x|^{-\alpha-4} u^2 dx,
\end{equation}
we can easily get $S_{0,2^{**}-1}(\lambda,\mu)\geq S^{**}$.\\
  
  On the other hand, for any $u\in C_{0}^{\infty}(\mathbb{R}^{n})\backslash \{0\}$, let $u_{y}(x)=u(x-y)$, then
\begin{align*}
S_{0,2^{**}-1}(\lambda,\mu) & \leq \lim_{|y|\to \infty}\frac{\int_{\mathbb{R}^{n}}|\Delta u_{y}|^2 dx-\lambda \int_{\mathbb{R}^{n}}|x|^{-2}|\nabla u_{y}|^2 dx+\mu\int_{\mathbb{R}^{n}}|x|^{-4}|u_{y}|^2 dx}{(\int_{\mathbb{R}^{n}}|u_{y}|^{2^{**}})^{\frac{2}{2^{**}}}}\\
& =\lim_{|y|\to \infty}\frac{\int_{\mathbb{R}^{n}}|\Delta u|^2 dx-\lambda\int_{\mathbb{R}^{n}}|x+y|^{-2}|\nabla u|^2+\mu\int_{\mathbb{R}^{n}}|x+y|^{-4}|u|^2 dx}{(\int_{\mathbb{R}^{n}}|u|^{2^{**}}dx)^{\frac{2}{2^{**}}}}\\
& =\frac{\int_{\mathbb{R}^{n}}|\Delta u|^2 dx}{(\int_{\mathbb{R}^{n}}|u|^{2^{**}}dx)^{\frac{2}{2^{**}}}},\\
\end{align*}
thus,
$$S_{0,2^{**}-1}(\lambda,\mu)=S^{**}.$$
If $u\in D^{2,2}(\mathbb{R}^{n})$ is a minimizer of $S_{0,2^{**}-1}(\lambda,\mu)$, then $u$ is also a minimizer of $S^{**}$.
Therefore,
$$\left(\mu-\frac{(n-4)^2}{4} \lambda \right)\int_{\mathbb{R}^{n}}|x|^{-4}|u|^2 dx=0,$$
which implies $u=0$, a contradiction.
\end{proof}

\begin{lem}
For any $\lambda \leq 0$ and $\mu \geq \frac{(n-4-\alpha)^2}{4} \lambda$, the following inequalities hold
$$\limsup_{(\alpha,p)\to(0,(2^{**}-1)_{-})}S_{\alpha,p}(\lambda,\mu)\leq S_{0,2^{**}-1}(\lambda,\mu),$$
$$S_{0,2^{**}-1}^{rad}(\lambda,\mu)\leq \liminf_{(\alpha,p)\to(0,(2^{**}-1)_{-})}S_{\alpha,p}^{rad}(\lambda.\mu).$$
\end{lem}

\begin{proof}
Fix $\lambda \leq 0$ and $\mu \geq \frac{(n-4-\alpha)^2}{4} \lambda$, for any $ u\in C_{0}^{\infty}(\mathbb{R}^{n} \backslash \{0\})$, denote
$$Q_{\alpha,p}(u)=\frac{\int_{\mathbb{R}^{n}}|x|^{-\alpha}|\Delta u|^2 dx-\lambda \int_{\mathbb{R}^{n}}|x|^{-\alpha-2}|\nabla u|^2 dx+\mu \int_{\mathbb{R}^{n}}|x|^{-\alpha-4}u^2 dx}{(\int_{\mathbb{R}^{n}}|x|^{\beta}|u|^{p+1}dx)^{\frac{2}{p+1}}}.$$
As when $(\alpha,p)\to(0,(2^{**}-1)_{-})$, we have $Q_{\alpha,p}(u)\to Q_{0,2^{**}-1}(u)$, and $C_{0}^{\infty}(\mathbb{R}^{n}\backslash \{0\})$ is dense in $D^{2,2}(\mathbb{R}^{n};\alpha)$, the first inequality immediately follows.

  On the other hand, for any $p \in (1,2^{**}-1)$, let $\theta_p =\frac{p-1}{2^{**}-2}$. By the H{\"o}lder inequality, one has that
$$\int_{\mathbb{R}^{n}}|x|^{\beta}|u|^{p+1}dx\leq(\int_{\mathbb{R}^{n}}|x|^{-\alpha-4}|u|^2dx)^{1-\theta_{p}}(\int_{\mathbb{R}^{n}}|x|^{-\frac{n\alpha}{n-4}}|u|^{2^{**}}dx)^{\theta_p}.$$
Therefore
\begin{align}
[S_{\alpha,p}^{rad}(\lambda,\mu)]^{\frac{p+1}{\theta_p 2^{**}}} 
& =\left[\inf \limits_{u\in D_{rad}^{2,2}(\mathbb{R}^{n},\alpha)\backslash\{0\}}\frac{\int_{\mathbb{R}^{n}}|x|^{-\alpha}|\Delta u|^{2}-\lambda|x|^{-\alpha-2}|\nabla u|^{2}+\mu|x|^{-\alpha-4}u^2 dx}{(\int_{\mathbb{R}^{n}}|x|^{\beta}|u|^{p+1} dx)^{\frac{2}{p+1}}}\right]^{\frac{p+1}{\theta_p 2^{**}}}\nonumber \\
& \geq \inf \limits_{u\in D_{rad}^{2,2}(\mathbb{R}^{n},\alpha)\backslash\{0\}} \frac{(\int_{\mathbb{R}^{n}}|x|^{-\alpha}|\Delta u|^2-\lambda |x|^{-\alpha-2}|\nabla u|^2+\mu |x|^{-\alpha-4}u^2 dx)^{\frac{p+1}{\theta_p 2^{**}}}}{(\int_{\mathbb{R}^{n}}|x|^{-\alpha-4}|u|^2)^{\frac{2(1-\theta_p)}{p+1} \frac{p+1}{\theta_p 2^{**}}}(\int_{\mathbb{R}^{n}}|x|^{-\frac{n \alpha}{n-4}}|u|^{2^{**}})^{\frac{2\theta_p}{p+1} \frac{p+1}{\theta_p 2^{**}}}}\nonumber \\
& \geq \eta_{\alpha}^{\frac{2(1-\theta_p)}{\theta_p 2^{**}}}S_{\alpha,2^{**}-1}^{rad}(\lambda,\mu),\nonumber\\
\end{align}
where $\eta_{\alpha}$ is the best constant of the weighted Rellich inequality
$$\eta_{\alpha} \int_{\mathbb{R}^{n}}|x|^{-\alpha-4}|u|^2 dx \leq \int_{\mathbb{R}^{n}}|x|^{-\alpha}|\Delta u|^2 dx, \quad \forall u\in C_{0}^{\infty}(\mathbb{R}^{n}\backslash\{0\}).$$
For any $\alpha$, let $\tau_{\alpha}:=1-\frac{\alpha}{n-4}$,for any radial $u \in C_{0}^{\infty}(\mathbb{R}^{n}\backslash\{0\}) $, set $\tilde{u}(x)=u(|x|^{1/\tau_{\alpha}})$, then we have
$$\int_{\mathbb{R}^{n}}|\tilde{u}|^{2^{**}}dx=\tau_{\alpha}\int_{\mathbb{R}^{n}}|x|^{-\frac{n\alpha}{n-4}}|u|^{2^{**}}dx,$$
$$\int_{\mathbb{R}^{n}}|x|^{-2}|\nabla \tilde{u}|^2 dx=\tau_{\alpha}^{-1}\int_{\mathbb{R}^{n}}|x|^{-\alpha-2}|\nabla u|^2 dx,$$
$$\int_{\mathbb{R}^{n}}|x|^{-4}\tilde{u}^2 dx=\tau_{\alpha}\int_{\mathbb{R}^{n}}|x|^{-\alpha-4}u^2 dx,$$
$$\int_{\mathbb{R}^{n}}|\Delta \tilde{u}|^2 dx=\tau_{\alpha}^{-3}\int_{\mathbb{R}^{n}}|x|^{-\alpha}|\Delta u+R_{\alpha}u|^2 dx,$$
where
$$R_{\alpha}u:=(\tau_{\alpha}-1)(n-2)\nabla u \cdot \frac{x}{|x|^2}.$$
Denote
$$\gamma_{\alpha}:=\inf \limits_{u\in D^{2,2}(\mathbb{R}^{n};\alpha)\backslash\{0\}}\frac{\int_{\mathbb{R}^{n}}|x|^{-\alpha}|\Delta u|^2 dx}{\int_{\mathbb{R}^{n}}|x|^{-\alpha-2}|\nabla u|^2 dx},$$
and set
$$\epsilon_{\alpha}=|\tau_{\alpha}-1|(n-2)\gamma_{\alpha}^{-1/2},$$
since $\alpha \in (-n,n-4)$, we have
$$\int_{\mathbb{R}^{n}}|x|^{-\alpha}|R_{\alpha}u|^2 dx \leq \epsilon_{\alpha}^2 \int_{\mathbb{R}^{n}}|x|^{-\alpha}|\Delta u|^2 dx.$$
By the Cauchy-Schwarz inequality, one obtains
$$\int_{\mathbb{R}^{n}}|x|^{-\alpha}|\Delta u+R_{\alpha}u|^2 dx \leq (1+\epsilon_{\alpha})^2\int_{\mathbb{R}^{n}}|x|^{-\alpha}|\Delta u|^2 dx.$$
Thus,
\begin{align*}
S_{0,2^{**}-1}^{rad}(\lambda,\mu) &\leq \frac{\int_{\mathbb{R}^{n}}(|\Delta \tilde{u}|^2-\lambda|x|^{-2}|\nabla \tilde{u}|^2+\mu|x|^{-4}\tilde{u}^2)dx}{(\int_{\mathbb{R}^{n}}|\tilde{u}|^{2^{**}}dx)^{2/2^{**}}}\\
& \leq \frac{(1+\epsilon_{\alpha})^2}{\tau_{\alpha}^{3+2/2^{**}}} \frac{\int_{\mathbb{R}^{n}}(|x|^{-\alpha}|\Delta u|^2-\lambda|x|^{-\alpha-2}|\nabla u|^2+\mu|x|^{-\alpha-4}u^2)dx}{(\int_{\mathbb{R}^{n}}|x|^{-\frac{n\alpha}{n-4}}|u|^{2^{**}}dx)^{2/2^{**}}}\\
& -\frac{\lambda}{\tau_{\alpha}^{1+2/2^{**}}}(1-\frac{(1+\epsilon_{\alpha})^2}{\tau_{\alpha}^2})\frac{\int_{\mathbb{R}^{n}}|x|^{-\alpha-2}|\nabla u|^2dx}{(\int_{\mathbb{R}^{n}}|x|^{-\frac{n\alpha}{n-4}}|u|^{2^{**}}dx)^{2/2^{**}}}\\
& +\mu \tau_{\alpha}^{1-\frac{2}{2^{**}}}(1-\frac{(1+\epsilon_{\alpha})^2}{\tau_{\alpha}^4})\frac{\int_{\mathbb{R}^{n}}|x|^{-\alpha-4}u^2 dx}{(\int_{\mathbb{R}^{n}}|x|^{-\frac{n\alpha}{n-4}}|u|^{2^{**}}dx)^{2/2^{**}}}\\
& \leq K_{\alpha} \frac{\int_{\mathbb{R}^{n}}(|x|^{-\alpha}|\Delta u|^2-\lambda |x|^{-\alpha-2}|\nabla u|^2+\mu |x|^{-\alpha-4}u^2)dx}{(\int_{\mathbb{R}^{n}}|x|^{-\frac{n\alpha}{n-4}}|u|^{2^{**}}dx)^{2/2^{**}}},\\
\end{align*}
where
$$K_{\alpha}=\frac{(1+\epsilon_{\alpha})^2}{\tau_{\alpha}^{3+2/2^{**}}}-\frac{\lambda \gamma_{\alpha}^{-1}}{\tau_{\alpha}^{1+2/2^{**}}}\left|1-\frac{(1+\epsilon_{\alpha})^2}{\tau_{\alpha}^2}\right|+|\mu| \eta_{\alpha}^{-1}\tau_{\alpha}^{1-2/2^{**}}\left|1-\frac{(1+\epsilon_{\alpha})^2}{\tau_{\alpha}^4}\right|.$$
Therefore,
\begin{equation}
S_{\alpha,2^{**}-1}^{rad}(\lambda,\mu) \geq K_{\alpha}^{-1}S_{0,2^{**}-1}^{rad}(\lambda,\mu).
\end{equation}
For $|\alpha|$ small, there exists a constant $C>0$, such that $C^{-1} \leq \gamma_{\alpha} \leq C$ and $C^{-1} \leq \eta_{\alpha} \leq C$. Consequently, as $(\alpha,p) \to (0,(2^{**}-1)_{-})$, we have $\tau_{\alpha} \to 1$, $\theta_{p} \to 1$, $\epsilon_{\alpha} \to 0$, $K_{\alpha} \to 1$. Hence, the second inequality follows from (3.2)-(3.3).
\end{proof}

\noindent\textbf{Proof of Theorem 1.4}
Fix $\lambda <0$ and $\mu >\frac{(n-4-\alpha)^2}{4} \lambda$, by Theorem 2.1, $S_{0,2^{**}-1}^{rad}(\lambda,\mu)$ is achieved in $D_{rad}^{2,2}(\mathbb{R}^{n})$. On the other hand, by Lemma 3.1, $S_{0,2^{**}-1}(\lambda,\mu)$ can not be achieved. Hence,
$S_{0,2^{**}-1}^{rad}(\lambda,\mu)>S_{0,2^{**}-1}(\lambda,\mu)$, then the conclusion follows by using Lemma 3.2. 
\hfill $\square$\par


\begin{thebibliography}{99}

\bibitem{BM12}
\newblock M.Bhakta and R.Musina,
\newblock Entire solutions for a class of variational problems involving the biharmonic operator and Rellich potentials,
\newblock \emph{Nonlinear Analysis.} 75 (2012), 3836--3848.

\bibitem{CC16}
\newblock P.Caldiroli and G.Cora,
\newblock Entire solutions for a class of fourth-order semilinear elliptic equations with weights,
\newblock \emph{Mediterr. J. Math.} 13 (2016), 657--675.

\bibitem{CM11}
\newblock P.Caldiroli and R.Musina,
\newblock On Caffarelli-Kohn-Nirenberg type inequalities for the weighted biharmonic operator in cones,
\newblock \emph{Milan J. Math.} 79 (2011), 657--687.

\bibitem{CM12}
\newblock P.Caldiroli and R.Musina,
\newblock Rellich inequalities with weights,
\newblock \emph{Calc. Var. Partial Differential Equations.} 45 (2012), 147--164.

\bibitem{FK19}
\newblock R.Frank and T.K{\"o}nig,
\newblock Classification of positive solutions to a nonlinear biharmonic equation with critical exponent,
\newblock \emph{Anal. PDE.} 12 (2019), 1101--1113.

\bibitem{GHWW}
\newblock Z.M.Guo, X.Huang, L.P.Wang and J.C.Wei,
\newblock On Delaunay solutions of a biharmonic elliptic equation with critical exponent,
\newblock \emph{J. Anal. Math.} 140 (2020), 371--394.

\bibitem{GWW}
\newblock Z.M.Guo, F.S.Wan and L.P.Wang,
\newblock Embeddings of weighted Sobolev spaces and a weighted fourth-order elliptic equation,
\newblock \emph{Commun. Contemp. Math.} 22 (2020), 1950057, 40 pp.

\bibitem{HW20}
\newblock X.Huang and L.P.Wang,
\newblock Classification to the positive radial solutions with weighted biharmonic
        equation,
\newblock \emph{Discrete Contin. Dyn. Syst.} 40 (2020), 4821--4837.


\bibitem{HY}
\newblock X.Huang and D.Ye,
\newblock Hardy-Rellich type equalities,
\newblock \emph{preprint}(2021).


\bibitem{LI98}
\newblock C-S.Lin,
\newblock A classification of solutions of a conformally invariant fourth order equation in $\mathbb{R}^N$,
\newblock \emph{Comment. Math. Helv.} 73 (1998), 206--231.

\bibitem{LI84}
\newblock P.L.Lions,
\newblock The concentration-compactness principle in the calculus of variations. The locally compact case, I and II,
\newblock \emph{Ann. Inst. H. Poincar{\'e} Anal. Non Lin{\'e}aire.} 1 (1984), 109--145, 223--283.

\bibitem{M14}
\newblock R.Musina,
\newblock Weighted Sobolev spaces of radially symmetric functions,
\newblock \emph{Ann. Mat. Pura Appl.} 193 (2014), 1626--1659.


\end{thebibliography}
\end{document}